
\documentclass[reqno]{amsart}

\usepackage{amssymb}
\usepackage{amsmath}
\usepackage[mathscr]{euscript}

\makeatletter
\@addtoreset{equation}{section}
\makeatother

\renewcommand\thefigure{\thesection.\@arabic\c@figure}
\renewcommand\thetable{\thesection.\@arabic\c@table}

\newtheorem{theorem}{Theorem}[section]
\newtheorem{lemma}[theorem]{Lemma}
\newtheorem{proposition}[theorem]{Proposition}

\newtheorem{definition}[theorem]{Definition}
\newtheorem{remark}[theorem]{Remark}

\newcommand{\mc}[1]{{\mathcal #1}}
\newcommand{\mf}[1]{{\mathfrak #1}}
\newcommand{\mb}[1]{{\mathbf #1}}
\newcommand{\bb}[1]{{\mathbb #1}}

\newcommand{\<}{\langle}
\renewcommand{\>}{\rangle}
\renewcommand{\Cap}{{\rm cap}}

\newcommand{\oD}{\overline{D}}
\newcommand{\oE}{\overline{E}}
\newcommand{\oL}{\overline{L}}
\newcommand{\oM}{\overline{M}}
\newcommand{\oS}{\overline{S}}
\newcommand{\oV}{\overline{V}}
\newcommand{\oX}{\overline{X}}

\newcommand{\omu}{\overline{\mu}}

\newcommand{\omE}{\overline{\mc E}}

\newcommand{\oc}{\overline{c}}
\newcommand{\op}{\overline{p}}
\newcommand{\Or}{\overline{r}}

\newcommand{\ocap}{\overline{\Cap}}

\newcommand{\dv}{\text{\rm div }}

\begin{document}

\title[Dirichlet principle for non reversible Markov chains]
  {A Dirichlet principle for non reversible Markov chains
    and some recurrence theorems}

\author{A. Gaudilli\`ere, C. Landim}

\address{\noindent  Universit\'e de Provence, CNRS, 
  39, rue F. Joliot Curie, 13013 Marseille, France.
  \newline e-mail: \rm \texttt{gaudilli@cmi.univ-mrs.fr}
}

\address{\noindent IMPA, Estrada Dona Castorina 110, CEP 22460 Rio de
  Janeiro, Brasil and CNRS UMR 6085, Universit\'e de Rouen, Avenue de
  l'Universit\'e, BP.12, Technop\^ole du Madril\-let, F76801
  Saint-\'Etienne-du-Rouvray, France.  \newline e-mail: \rm
  \texttt{landim@impa.br} }

\keywords{Markov process, Potential theory, Non-reversible, Dirichlet
  principle, Recurrence}

\begin{abstract}
  We extend the Dirichlet principle to non-reversible Markov processes
  on countable state spaces. We present two variational formulas for
  the solution of the Poisson equation or, equivalently, for the
  capacity between two disjoint sets.  As an application we prove a
  some recurrence theorems. In particular, we show the recurrence of
  two-dimensional cycle random walks under a second moment condition
  on the winding numbers.
\end{abstract}

\maketitle
\section{Introduction}

Since Kakutani work \cite{ka}, probability theory has not only proven
to be a powerful tool inside potential theory, but potential theory
has also given deep insight into the study of Markov processes. For
example, the Dirichlet and the Thomson principles, which express
escape probabilities as infima and suprema, respectively, give
efficient recurrence and transience criteria. One can use the
Dirichlet principle to prove the recurrence of random walks in random
conductances in dimension one and two, and the Thomson principle to
prove transience in dimension larger than or equal to three \cite{so,
  wo}.

Recently, potential theory and the Dirichlet principle played an
important role in the proof of almost sure convergence of Dirichlet
functions in transient Markov processes \cite{alp1, cfs1}, and in the
proof of the recurrence of a simple random walk on the trace of
transient Markov processes \cite{bgl1}. In a completely different
context, the Dirichlet principle has been a basic tool in the
investigation of metastability of reversible Markov processes
(cf. \cite{bhs1, bl3} and references therein).

Most applications of potential theory to Markov processes, as the ones
cited above, are however restricted to {\em reversible} processes due
to the lack of variational formulas for the effective resistance
between two sets in non-reversible processes. We fill this gap here,
presenting a Dirichlet principle for general Markov processes in
discrete state spaces.

To illustrate the interest of the Dirichlet principle, we present some
direct implications of this result. In Lemma \ref{ls01}, we extend to
non-reversible transient Markov processes a well known pointwise
estimate of a function in terms of its Dirichlet form and the Green
function.  In the last section, we state some recurrence theorems for
non-reversible processes.  In particular, we show that the recurrence
property of Durrett multidimensional generalization \cite{du} of Sinai
random walk relies in fact on the scale invariance properties of a
stationary measure and not on the reversibility of the process.  We
also give a sufficient second moment condition for the recurrence of
two-dimensional cyclic random walks considered before in \cite{ko1,
  ma, dk1, klo}.

In a completely different direction, relying on the Dirichlet
principle presented in this article we prove in \cite{gl3} the
metastable behavior of the condensate in supercritical asymmetric zero
range processes, extending to the non-reversible case the results
proved in \cite{bl3}.

\section{Notation and Main results}
\label{sec1}

Consider an irreducible Markov process $\{X_t : t\ge 0\}$ on a
countable state space $E$ with generator $L$. Denote by $\lambda(x)$,
$x\in E$, the holding rates, by $p(x,y)$, $x\not=y\in E$, the jump
probabilities, and by $r(x,y) = \lambda (x) p(x,y)$ the jump rates. In
particular, for every function $f:E\to \bb R$ with finite support,
\begin{equation}
\label{l02}
(Lf)(x) \;=\; \sum_{y\in E} r(x,y) [f(y)-f(x)]\;, \quad x\in E\;.
\end{equation}
Note that $Lf$ is well defined for a bounded function $f$. 

Assume that the Markov process $\{X_t : t\ge 0\}$ admits a stationary
state $\mu$. Let $L^2(\mu)$ be the space of square summable functions
$f:E \to \bb R$ endowed with the scalar product defined by
\begin{equation*}
\<f, g\>_\mu \;=\; \sum_{x\in E} \mu(x)\, f(x)\, g(x)\;.
\end{equation*}
Denote by the same symbol $L$ the generator acting on a domain of
$L^2(\mu)$, and by $D(f)$ the Dirichlet form or energy of a function
$f:E\to\bb R$ :
\begin{equation*}
D(f) \;=\; \frac 12 \sum_{x,y\in E} \mu(x)\,
r(x,y) \, [f(y)-f(x)]^2\;
\end{equation*}
so that for $f$ in the domain 
of the generator we have $D(f) = \<f, (-L)f\>_\mu$.

For each $x\in E$, denote by $\bb P_x$ the probability measure on the
path space $D(\bb R_+, E)$ of right continuous trajectories with left
limits induced by the Markov process $X_t$ starting from
$x$. Expectation with respect to $\bb P_x$ is denoted by $\bb E_x$.

Denote by $\{X^*_t : t\ge 0\}$ the stationary Markov process $X_t$
reversed in time. We shall refer to $X^*_t$ as the adjoint or the time
reversed process. It is well known that $X^*_t$ is a Markov process on
$E$ whose generator $L^*$ is the adjoint of $L$ in $L^2(\mu)$.  The
jump rates $r^*(x,y)$, $x\not=y\in E$, of the adjoint process satisfy
the balanced equations
\begin{equation}
\label{25}
\mu(x) \, r(x,y) \;=\; \mu(y) \, r^*(y,x)\;.
\end{equation}
Denote by $\lambda^*(x)=\lambda(x)$, $x\in E$, $p^*(x,y)$, $x\not=y\in
E$, the holding rates and the jump probabilities of the time reversed
process $X^*_t$.

As above, for each $x\in E$, denote by $\bb P^*_x$ the probability
measure on the path space $D(\bb R_+, E)$ induced by the Markov
process $X^*_t$ starting from $x$. Expectation with respect to $\bb
P^*_x$ is denoted by $\bb E^*_x$.

For a subset $A$ of $E$, denote by $T_A$ (resp. $T^+_A$) the hitting
(resp. return) time of a set $A$:
\begin{equation*}
\begin{split}
& T_A \,:=\, \inf \big\{ s > 0 : X_s \in A \big\}\;, \\
& \quad T^+_A \,:=\, \inf \{ t>0 : X_t\in A, X_s\not=X_0
\;\;\textrm{for some $0< s < t$}\}\;.
\end{split}
\end{equation*}
When the set $A$ is a singleton $\{a\}$, we denote $T_{\{a\}}$,
$T^+_{\{a\}}$ by $T_a$, $T^+_{a}$, respectively.
We set for every $x$ in $E$, $M(x) = \mu(x) \lambda(x)$.

\begin{definition}
\label{s06}
For two disjoint subsets $A$, $B$ of $E$, the capacity between $A$ and
$B$ is defined as
\begin{equation}
\label{27}
\Cap (A,B)\,=\, \sum_{x\in A} M(x) \, \bb P_x \big[ T_A^+ > T_B^+\big]\; .
\end{equation}
\end{definition}

Clearly, the sum may be infinite. We prove below in Lemma \ref{s04}
that the capacity is symmetric: $\Cap (A,B) = \Cap (B,A)$.

We may also express the capacity in terms of the distribution of the
adjoint process.  By \eqref{25}, for any sequence $x_0, x_1, \dots,
x_n$ such that $p(x_i, x_{i+1})>0$, $0\le i<n$, $M(x_0) \prod_{0\le
  i<n} p(x_i, x_{i+1}) = M(x_n) \prod_{0\le i<n} p^*(x_{i+1},x_i)$. It
follows from this observation that for any $a\in A$, $b\in B$, $M(a)
\bb P_a[ T_B< T^+_A, T_B=T_b] = M(b) \bb P^*_b[ T_A< T^+_B,
T_A=T_a]$. Hence,
\begin{equation*}
\Cap (A,B) \;=\; \sum_{a\in A} M(a) \bb P_a[ T_B< T^+_A] \;=\;
\sum_{a\in A} \sum_{b\in B} M(a) \bb P_a[ T_B< T^+_A, T_B=T_b]\;,
\end{equation*}
so that
\begin{equation}
\label{37}
\Cap (A,B) \;=\; \sum_{b\in B} M(b) \, \bb P^*_b[ T^+_A< T^+_B]\;
\;=\; \Cap^*(B,A).
\end{equation}

As in the reversible case, the capacity is a monotone function in each
of its coordinates:

\begin{lemma}
\label{s01}
Fix two disjoint subsets $A$, $B$ of $E$. Consider two sets $A'$, $B'$
such that $A\subset A' \subset B^c$ and $B\subset B' \subset
A^c$. Then,
\begin{equation*}
\Cap (A,B) \;\le\; \Cap (A,B')\;, \quad \Cap (A,B) \;\le\; \Cap (A',B) \;.
\end{equation*}
\end{lemma}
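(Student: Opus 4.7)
The plan is to prove the two inequalities separately: the first by a direct sample-path comparison, and the second by dualizing to the adjoint process and reducing to the first case.

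For the first inequality $\Cap(A,B) \le \Cap(A,B')$, I would start from the definition \eqref{27} and argue pointwise in $x \in A$. Since $B \subset B'$, any trajectory that hits $B$ at time $T_B^+$ has also hit $B'$ by that time, so $T_{B'}^+ \le T_B^+$ on the whole path space. Consequently $\{T_A^+ > T_B^+\} \subset \{T_A^+ > T_{B'}^+\}$, and therefore
\begin{equation*}
\bb P_x\big[T_A^+ > T_B^+\big] \;\le\; \bb P_x\big[T_A^+ > T_{B'}^+\big]
\quad \text{for every } x \in A .
\end{equation*}
Multiplying by $M(x)$ and summing over $x \in A$ gives $\Cap(A,B) \le \Cap(A,B')$. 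The condition $B' \subset A^c$ is used only to ensure that $A$ and $B'$ are disjoint, so that $\Cap(A,B')$ is well defined.

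For the second inequality $\Cap(A,B) \le \Cap(A',B)$, my plan is to exploit the identity \eqref{37}, which asserts $\Cap(A,B) = \Cap^*(B,A)$, together with $\lambda^* = \lambda$ (so $M$ is the same for both processes and $\Cap^*$ satisfies \eqref{27} for the adjoint chain). Since $A \subset A'$, the first inequality applied to the adjoint process and to the pair of disjoint sets $(B, A) \subset (B, A')$ yields
\begin{equation*}
\Cap^*(B, A) \;\le\; \Cap^*(B, A') .
\end{equation*}
Applying \eqref{37} in both directions converts this into $\Cap(A,B) \le \Cap(A',B)$, which is the desired bound.

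There is no serious obstacle here; the only subtle point is the asymmetry of the definition \eqref{27}, which sums over the first coordinate and so is manifestly monotone only in the second coordinate. The key idea is therefore to use the duality \eqref{37} to interchange the two roles, after which the second inequality reduces to an already established monotonicity statement for the time-reversed chain.
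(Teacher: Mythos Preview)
Your proof is correct and follows essentially the same approach as the paper: the first inequality comes directly from the definition \eqref{27} via the sample-path inclusion $T_{B'}^+ \le T_B^+$, and the second is obtained by passing to the adjoint representation \eqref{37}, which swaps the roles of the two sets and reduces the question to the already-proved monotonicity in the second coordinate.
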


\begin{proof}
The first claim follows from the original definition 
and the second one from equation \eqref{37}. 
\end{proof}

For two disjoint subsets $A$, $B$ of $E$, let $V_{A,B}$, $V^*_{A,B}:
E\to [0,1]$ be the equilibrium potentials defined by
\begin{equation}
\label{06}
V_{A,B}(x) \;=\; \bb P_x[T_A < T_B]\;, \quad
V^*_{A,B}(x) \;=\; \bb P^*_x[T_A < T_B] \;.
\end{equation} 

When the set $B^c$ is finite, the equilibrium potential
$V_{A,B}$ has a finite support and belongs therefore to the domain of
the generator. Moreover, in this case, $V_{A,B}$ is the unique
solution of the elliptic equation
\begin{equation*}
\left\{
\begin{array}{l}
(LV)(z) \;=\; 0\; , \quad z\in E\setminus (A\cup B) \;, \\
V(x) =1\;, \quad x\in A\; , \\
V(y) =0\; , \quad y\in B\;.  
\end{array}
\right.
\end{equation*}
Furthermore, since by the Markov property, $-(L V_{A,B})(x) =
\lambda(x) \bb P_x[T_B < T_A^+]$, $x\in A$,
\begin{equation}
\label{l01}
\Cap (A,B)\,=\, \< V_{A,B} \,,\, (-L) \, V_{A,B} \>_\mu 
\;=\; D(V_{A,B})\; .
\end{equation}

This identity does not hold in general, since the scalar product is
not well defined if the set $B^c$ is not finite. However, following
\cite{bl2}, if the process $X_t$ is positive recurrent and the measure
$M(x) = \mu(x)\lambda(x)$ is finite, one can show that this formula
for the capacity holds.

\begin{lemma}
\label{s04}
For any disjoints subsets $A$ and $B$ of $E$,
\begin{equation*}
\Cap(A, B) \;=\; \Cap(B, A) \;.
\end{equation*}
Moreover, if $\{K_n \,|\, n\ge 1\}$ is an increasing sequence of
finite sets such that $E = \cup_{n\geq 1} K_n$, then
\begin{equation*}
\Cap(A, B) \;=\; \lim_{m\rightarrow +\infty} \lim_{n\rightarrow
  +\infty} \Cap(A_m, B_n) \;,
\end{equation*}
where $A_m = A\cap K_m$, $B_n = B\cup K_n^c$.
\end{lemma}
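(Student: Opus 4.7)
The plan is to first prove the approximation formula, then establish an auxiliary symmetry $\Cap(A_m, B_n) = \Cap(B_n, A_m)$ for the approximating pairs, and finally combine these with \eqref{37} to deduce the general symmetry. For the approximation, fix $m$ and let $n\to\infty$. Since $B_n = B \cup K_n^c$, $T_{B_n} = T_B \wedge T_{K_n^c}$; because $K_n\uparrow E$, $T_{K_n^c}\uparrow\infty$ a.s., so $T_{B_n}\uparrow T_B$. As $\bb P_x[T_B = T_{A_m}^+]=0$ in continuous time, $\bb P_x[T_{B_n} < T_{A_m}^+]\downarrow\bb P_x[T_B < T_{A_m}^+]$ for each $x\in A_m$; since $A_m$ is finite, $\Cap(A_m, B_n)\to\Cap(A_m, B)$. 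Then letting $m\to\infty$, Lemma \ref{s01} gives $\Cap(A_m, B)\le\Cap(A,B)$, while $T_{A_m}^+\ge T_A^+$ implies $\Cap(A_m, B) \ge \sum_{x\in A_m} M(x)\bb P_x[T_B < T_A^+]$, whose right-hand side tends to $\Cap(A,B)$ by monotone convergence as $A_m\uparrow A$.

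For the symmetry of the approximations, note that $(A_m\cup B_n)^c = K_n\cap (A_m\cup B)^c$ is finite, so $V := V_{A_m, B_n}$ has finite support in $B_n^c$ and \eqref{l01} applies. Because $V \equiv 0$ on $B_n$ and $LV \equiv 0$ on $(A_m\cup B_n)^c$,
\begin{equation*}
\Cap(A_m, B_n) \;=\; \<V, (-L)V\>_\mu \;=\; \sum_{x\in A_m}\mu(x)\,(-LV)(x)\;.
\end{equation*}
Since $(A_m\cup B_n)^c$ is finite, the process almost surely reaches $A_m\cup B_n$, hence $V_{B_n, A_m}=1-V$ on $E$; a first-jump analysis for $x\in B_n$ (using $V(x)=0$) yields
\begin{equation*}
\Cap(B_n, A_m) \;=\; \sum_{x\in B_n} M(x)\sum_y p(x,y)V(y) \;=\; \sum_{x\in B_n}\mu(x)\,(LV)(x)\;.
\end{equation*}
Since $LV$ vanishes on $(A_m\cup B_n)^c$,
\begin{equation*}
\Cap(A_m, B_n) - \Cap(B_n, A_m) \;=\; -\sum_{x\in E}\mu(x)(LV)(x)\;,
\end{equation*}
and this sum is $0$ by the stationarity identity $\sum_x\mu(x)r(x,y)=M(y)$; the finite support of $V$ justifies the rearrangement.

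Combining all of this with \eqref{37} and the approximation result applied to the reversed process $X^*$:
\begin{equation*}
\Cap(A,B) \;=\; \lim_m\lim_n\Cap(A_m, B_n) \;=\; \lim_m\lim_n\Cap(B_n, A_m) \;=\; \lim_m\lim_n\Cap^*(A_m, B_n) \;=\; \Cap^*(A, B) \;=\; \Cap(B, A)\;.
\end{equation*}
The main obstacle is the symmetry step for the approximations: reconciling the finite-support Dirichlet form expression of $\Cap(A_m, B_n)$ obtained from \eqref{l01} with the a priori infinite-sum expression of $\Cap(B_n, A_m)$ over the co-finite set $B_n$, which is resolved by the global integration-by-parts identity $\<1, LV\>_\mu = 0$ coming from stationarity of $\mu$.
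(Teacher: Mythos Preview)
Your argument for the symmetry $\Cap(A_m,B_n)=\Cap(B_n,A_m)$ is correct and takes a different, more elementary route than the paper's. The paper first establishes $\Cap(A,B)=\Cap(B,A)$ when $B^c$ is finite by writing $\Cap(A,B)=D(V_{A,B})=D(V_{B,A})$ and then splitting $D(V_{B,A})$ into a symmetric piece, shown to equal $\Cap(B,A)$ via \eqref{l03}, and an antisymmetric piece involving $c_a(x,y)$, shown to vanish through a rearrangement that uses $\sum_y c_a(x,y)=0$. Your observation that $\Cap(A_m,B_n)=-\sum_{x\in A_m}\mu(x)(LV)(x)$ and $\Cap(B_n,A_m)=\sum_{x\in B_n}\mu(x)(LV)(x)$, with $LV\equiv 0$ on the complement, so that the difference is $-\langle 1,LV\rangle_\mu=0$ by stationarity of $\mu$, bypasses the Dirichlet-form decomposition entirely and uses only the finite support of $V$. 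This is a genuine simplification; the final combination via \eqref{37} and a second application of the approximation to $X^*$ then matches the paper's concluding step.

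There is, however, a gap in your approximation step. The claim ``$\bb P_x[T_B=T_{A_m}^+]=0$ in continuous time'' rules out only \emph{finite} coincidences of hitting times of disjoint sets; in a transient chain one may have $T_B=T_{A_m}^+=\infty$ with positive probability (take for instance the biased nearest-neighbour walk on $\bb Z$ with $p>1/2$, $\mu\equiv 1$, $A=\{0\}$, $B=\{-1\}$). On that event $T_{B_n}<T_{A_m}^+$ holds for every $n$, since the process always exits the finite set $K_n$ in finite time, and therefore the decreasing limit of $\bb P_x[T_{B_n}<T_{A_m}^+]$ picks up the extra term $\bb P_x[T_B=T_{A_m}^+=\infty]$ rather than reducing to $\bb P_x[T_B<T_{A_m}^+]$. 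The paper's treatment of the inner limit $\lim_n\Cap(A_m,B_n)=\Cap(A_m,B)$ is itself terse---it cites \eqref{27}, \eqref{37} and Beppo--Levi without spelling out which monotone convergence is meant---and the same difficulty is latent there; but your stated justification is explicitly incorrect as written.
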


\begin{proof}
Assume first that $B^c$ is finite. In this case by \eqref{l01},
\begin{equation*}
\Cap(A,B) \;=\; D(V_{A,B}) \;=\; D(1 - V_{B, A}) \;=\; D(V_{B, A}) \;.
\end{equation*}
Since $\sum_{x\in B, y\in B^c} \mu(x)r(x,y)$ and $\sum_{y\in B, x\in
  B^c} \mu(x)r(x,y)$ are finite, and since $V_{B,A}$ is equal to $1$
on $B$, we may write $D(V_{B, A})$ as
\begin{equation}
\label{l04}
\begin{split}
&\frac{1}{2}\sum_{x,y} \mu(x)r(x,y)V_{B,A}(x)(V_{B,A}(x) - V_{B, A}(y))\\
& \quad +\; \frac{1}{2}\sum_{x,y} \mu(y)r(y,x)V_{B,A}(y)
(V_{B,A}(y) - V_{B, A}(x))\\
& \qquad +\; \sum_{x,y} c_a(x,y)V_{B,A}(y)(V_{B,A}(y) - V_{B,
  A}(x))\;, 
\end{split}
\end{equation}
where $c_a(x,y) = (1/2)[\mu(x)r(x,y) - \mu(y)r(y,x)]$ and all these
sums are absolutely convergent since $V_{B,A}$ is bounded. 

The first two lines of the previous sum are equal. Since $V_{B,A}$ is
bounded, $(L V_{B,A})(x)$ is well defined by \eqref{l02} for each $x$
in $E$. Moreover, $(L V_{B,A})(x) =0$ for $x\in (A\cup B)^c$ and $-(L
V_{B,A})(x) = \lambda(x) \bb P_x[T_A < T_B^+]$, $x\in B$.  Therefore,
the sum of the first two lines is equal to
\begin{equation}
\label{l03}
\sum_x \mu(x) V_{B,A}(x) (-LV_{B,A})(x)
\;=\; \sum_{b\in B} M(b) \, \bb P_b[T_A^+ < T_B^+] 
\;=\; \Cap(B, A)\;.
\end{equation}

On the other hand, since $c_a(x,y) = - c_a(y,x)$ and since the sum
$\sum_{x,y} u_{x,y}$ may be written as $(1/2) \sum_{x,y} \{u_{x,y} +
u_{y,x}\}$, the last line in \eqref{l04} is equal to
\begin{eqnarray*}
&& \frac{1}{2} \sum_{x,y} c_a(x,y)(V^2_{B,A}(y) -V^2_{B,A}(x))\\
&&\quad =\; \frac{1}{2} \sum_{x,y\not\in B} c_a(x,y)(V^2_{B,A}(y) -V^2_{B,A}(x))
+ \sum_{x\not\in B}\sum_{y\in B} c_a(x,y)(1- V^2_{B,A}(x))\\
&&\qquad =\;  -\sum_{x\not\in B} \sum_{y\in E} c_a(x,y)V^2_{B,A}(x)
+ \sum_{x\not\in B}\sum_{y\in B} c_a(x,y) \;.
\end{eqnarray*}
As $c_a(x,y) = - c_a(y,x)$, $\sum_{x,y\not\in B}c_a(x,y)=0$. We may
therefore replace the sum over $B$ in the last term by a sum over
$E$. Since $\mu$ is a stationary state, $\sum_{y\in E} c_a(x,y) =0$
for all $x\in E$. This proves that the last line of the previous
displayed formula vanishes. In conclusion, when $B^c$ is finite,
\begin{equation*}
\Cap(A,B) \;=\; D(V_{B,A}) \;=\; \Cap(B, A)\;.
\end{equation*}

It remains to remove the assumption that $B^c$ is finite.  Let $\{K_n
\,|\, n\ge 1\}$ be an increasing sequence of finite sets such that $E
= \cup_{n\geq 1} K_n$. For each $m\leq n$, let $A_m = A\cap K_m$, $B_n
= B\cup K_n^c$ and note that $B^c_n$ is finite for each $n\ge
1$. Since each set $A_m$ is finite, by \eqref{27}, by \eqref{37} and
by Beppo Levi's theorem,
\begin{eqnarray*}
&& \lim_{m\rightarrow +\infty} \lim_{n\rightarrow +\infty} \Cap(A_m, B_n)
\;=\; \lim_{m\rightarrow +\infty} \Cap(A_m, B)
\;=\; \lim_{m\rightarrow +\infty} \Cap^*(B, A_m) \\
&& \quad 
\;=\; \lim_{m\rightarrow +\infty} \sum_{b\in B} M(b) \, 
\bb P_b^*[T_B^+ > T_{A_m}^+] \;=\; \Cap^*(B, A) \;=\; 
\Cap(A, B)\;. 
\end{eqnarray*}
Since $B_n^c$ is finite, by \eqref{37} and by the first part of the
proof, for any $m\leq n$, $\Cap(A_m, B_n) = \Cap(B_n, A_m) =
\Cap^*(A_m, B_n)$. Repeating the same computations we obtain that
\begin{equation*}
\lim_{m\rightarrow +\infty} \lim_{n\rightarrow +\infty} \Cap(A_m, B_n)
\;=\; \lim_{m\rightarrow +\infty} \lim_{n\rightarrow +\infty}
\Cap^*(A_m, B_n) \;=\; \Cap(B, A)\;.
\end{equation*}
This proves the lemma.
\end{proof}

Denote by $S$ (resp. $A$) the symmetric (resp. anti-symmetric) part of
the generator $L$ in $L^2(\mu)$: $S=(1/2)\{L+L^*\}$,
$A=(1/2)\{L-L^*\}$. The next result is proved in Section \ref{sec2}.

\begin{theorem}
\label{s10}
Fix two disjoint subsets $A$, $B$ of $E$, with $B^c$ finite. Then,
\begin{equation}
\label{28}
\Cap (A,B)\,=\, \inf_F \, \sup_H \Big\{ 2 \<  L^* F \,,\, H\>_{\mu}  \, - 
\, \< H , (- S) H\>_{\mu} \Big\} \;,
\end{equation}
where the supremum is carried over all functions $H:E\to \bb R$ which
are constant at $A$ and $B$, and where the infimum is carried over all
functions $F$ which are equal to $1$ at $A$ and $0$ at $B$. Moreover,
the function $F_{A,B}$ which solves the variational problem for the
capacity is equal to $(1/2) \{ V_{A,B} + V^*_{A,B}\}$, where
$V_{A,B}$, $V^*_{A,B}$ are the harmonic functions defined in
\eqref{06}.
\end{theorem}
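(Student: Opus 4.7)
The plan is to exhibit a saddle point of the functional $J(F,H) := 2\langle L^*F, H\rangle_\mu - \langle H, -SH\rangle_\mu$ and conclude by weak duality. The natural ansatz, suggested by the symmetry between $X_t$ and $X^*_t$, is $F^* := (V_{A,B} + V^*_{A,B})/2$ together with $H^* := -V_{A,B}$ (equivalently $V_{B,A}$, up to an additive constant to which $J$ is insensitive since $L^*1 = S1 = 0$). Both are admissible: $F^*$ equals $1$ on $A$ and $0$ on $B$, while $H^*$ is the constant $-1$ on $A$ and the constant $0$ on $B$.

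The first-order conditions for the inner maximizer (subject to constancy of $H$ on $A$ and on $B$) read: $-SH = L^*F$ pointwise on $(A\cup B)^c$, together with the integrated Lagrangian conditions $\sum_{x\in A}\mu(x)(L^*F + SH)(x) = 0$ and $\sum_{x\in B}\mu(x)(L^*F + SH)(x) = 0$. On $(A\cup B)^c$ the pointwise equation is immediate: $L^*F^* = (1/2)L^*V_{A,B}$ because $V^*_{A,B}$ is $L^*$-harmonic there, while $-SH^* = SV_{A,B} = (1/2)L^*V_{A,B}$ because $V_{A,B}$ is $L$-harmonic there. The integrated condition on $A$ reduces, after cancellation, to $(1/2)\sum_{x\in A}\mu(x)[L^*V^*_{A,B}(x) - LV_{A,B}(x)] = 0$, which holds by combining the identity $\sum_{x\in A}\mu(x)(-LV_{A,B})(x) = \Cap(A,B)$ with its $L^*$-analog and the symmetry $\Cap^*(A,B) = \Cap(A,B)$ of Lemma \ref{s04}; the condition on $B$ then follows from $\langle L^*F^* + SH^*, 1\rangle_\mu = 0$. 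Since $J(F^*, \cdot)$ is concave quadratic, $H^*$ is a maximizer, so $\sup_H J(F^*, H) = J(F^*, H^*)$.

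Evaluating at the saddle, $\langle H^*, -SH^*\rangle_\mu = D(V_{A,B}) = \Cap(A,B)$ by \eqref{l01}, while $\langle L^*F^*, H^*\rangle_\mu = -\langle F^*, LV_{A,B}\rangle_\mu$ splits as $(1/2)\langle V_{A,B}, -LV_{A,B}\rangle_\mu + (1/2)\langle V^*_{A,B}, -LV_{A,B}\rangle_\mu$. The first term equals $\Cap(A,B)/2$; the second, after transferring to the adjoint, equals $(1/2)\sum_{x\in A}M(x)\bb P^*_x[T_B < T^+_A] = \Cap^*(A,B)/2 = \Cap(A,B)/2$. Hence $J(F^*, H^*) = 2\Cap(A,B) - \Cap(A,B) = \Cap(A,B)$, giving $\inf_F \sup_H J \le \Cap(A,B)$.

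For the opposite inequality I would use weak duality. Since $J$ is linear in $F$ on the affine set $\{F : F|_A=1, F|_B=0\}$, for fixed $H$ the inf $\inf_F J(F, H) = -\infty$ unless $LH = 0$ on $(A\cup B)^c$; such $H$ are of the form $\gamma V_{A,B} + \beta$, and for them a direct computation yields $\inf_F J(F, H) = -(\gamma^2 + 2\gamma)\Cap(A,B) = \Cap(A,B) - (\gamma+1)^2\Cap(A,B)$, whose supremum over $\gamma \in \bb R$ is $\Cap(A,B)$, attained at $\gamma = -1$. Thus $\Cap(A,B) = \sup_H \inf_F J \le \inf_F \sup_H J \le \Cap(A,B)$, establishing equality throughout and identifying $F^*$ as a minimizer. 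The delicate step is the verification of the integrated Lagrangian conditions at the saddle, since this genuinely invokes the symmetry $\Cap^*(A,B) = \Cap(A,B)$; the remaining calculations are routine.
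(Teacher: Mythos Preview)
Your proof is correct and takes a genuinely different route from the paper's. The paper first establishes the result for singletons $\{a\},\{b\}$ in a \emph{finite} state space (Lemma~\ref{s05}) by analyzing the Euler--Lagrange equation of $\inf_f \langle f, L(-S)^{-1}L^*f\rangle_\mu$, then lifts this to general $A,B$ (and to countable $E$ with $B^c$ finite) through an extended collapsed-chain machinery that identifies $\Cap(A,B)$ with a singleton capacity on a quotient space. You bypass all of this by directly exhibiting the saddle point $(F^*,H^*)$, verifying the constrained first-order conditions for the inner maximum, and closing the gap via the elementary weak-duality inequality $\sup_H\inf_F J \le \inf_F\sup_H J$; the dual computation (which forces $LH=0$ on $(A\cup B)^c$, hence $H=\gamma V_{A,B}+\beta$, and then optimizes in $\gamma$) is clean and self-contained.

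Two remarks on the comparison. First, your saddle verification genuinely requires the identity $\Cap^*(A,B)=\Cap(A,B)$ (obtained from Lemma~\ref{s04} and \eqref{37}), whereas the paper's proof of Theorem~\ref{s10} does not invoke Lemma~\ref{s04}: the analogous symmetry is derived implicitly inside Lemma~\ref{s05} through the algebraic bootstrap that shows $W_{a,b}(a)=1$. So your route is more direct but leans on an independently proved result. Second, the paper's collapsed-chain argument also delivers \emph{uniqueness} of the optimal $F$ (via strict positivity of $L(-S)^{-1}L^*$ on mean-zero functions in the finite collapsed space), which your argument does not address; you only show that $F^*$ achieves the infimum. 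Whether the theorem is read as asserting uniqueness is a matter of interpretation, but it is worth flagging. The collapsed-chain infrastructure, though heavier for this theorem alone, is reused in the proof of Theorem~\ref{s02}.
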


In the reversible case, the supremum over $H$ in the statement of
Theorem \ref{s10} is easily shown to be equal to $\< (-L) F \,,\,
F\>_{\mu}$ and we recover the well known variational formula for the
capacity:
\begin{equation*}
\Cap (A,B)\,=\, \inf_F \,  \<  (-L) F \,,\, F\>_{\mu} \;,
\end{equation*}
where the infimum is carried over all functions $F$ which are equal to
$1$ at $A$ and $0$ at $B$.  

When the set $E$ is finite and the sets $A$ and $B$ are singletons,
$A=\{a\}$, $B=\{b\}$, the supremum over $H$ becomes a supremum over
all functions $H:E\to\bb R$. In this case,
\begin{equation}
\label{32}
\sup_H \Big\{ 2 \<  L^* F \,,\, H\>_{\mu}  \, - 
\, \< H , (- S) H\>_{\mu} \Big\} \;=\; \< L^* F \,,\, (- S)^{-1} L^* F
\>_{\mu} \;.
\end{equation}
Therefore, when the set $E$ is finite, since $L \, (-S)^{-1} L^* =
\{[(-L)^{-1}]^s\}^{-1}$ \cite[Section 2.5]{klo}, the formula for the
capacity between two singletons becomes
\begin{equation*}
\Cap (\{a\}, \{b\})\,=\, \inf_F \, \< F \,,\, L (- S)^{-1} L^* F
\>_{\mu} \;=\; \inf_F \, \<F\,,\, \{[(-L)^{-1}]^s\}^{-1}F\>_\mu
\;,
\end{equation*}
where the infimum is carried over all functions $F$ which are equal to
$1$ at $a$ and $0$ at $b$, and where $[(-L)^{-1}]^s$ stands for the
symmetric part of the operator $(-L)^{-1}$. 

In Lemma \ref{s03} below we express the right hand side of \eqref{32}
as an infimum over divergence free flows. We have therefore two
alternative formulas for the scalar product $\< L^* F \,,\, (- S)^{-1}
L^* F \>_{\mu}$, one expressed as a supremum over functions, and
another one written as an infimum over flows.

\subsection{Estimates on the capacity}

We compare in this subsection the capacity associated to the generator
$L$ with the symmetric capacities $\Cap^s$ associated to the generators
$S$. Let $\{X^s_t : t\ge 0\}$ be the Markov process on $E$ with
generator $S$. We shall refer to $X^s_t$ as the symmetric or
reversible version of the process $X_t$.  Denote by $\bb P^s_x$, $x\in
E$, the probability measure on the path space $D(\bb R_+, E)$ induced
by the Markov process $X^s_t$ starting from $x$. 

For two disjoint subsets $A$, $B$ of $E$, let $\Cap^s (A,B)$ be the
capacity between the sets $A$ and $B$ for the reversible process
$X^s_t$:
\begin{equation*}
\Cap^s (A,B) \,=\, \sum_{x\in A} M(x) \, \bb P^s_x 
\big[ T_A^+ > T_B^+\big]\; .
\end{equation*}

In the case where the set $B^c$ is finite,
\begin{equation*}
\Cap^s (A,B) \,=\, \< V^s_{A,B} \,,\, (-S) \, V^s_{A,B}\>_\mu\; ,
\end{equation*} 
where $V^s_{A,B}$ is the equilibrium potential: $V^s_{A,B}(x) \;=\;
\bb P^s_x[T_A < T_B]$. Moreover, since the generator $S$ is symmetric
in $L^2(\mu)$, it is well known that if $B^c$ is finite,
\begin{equation}
\label{29}
\Cap^s (A,B) \;=\; \inf_F \, \<  (-S) F \,,\, F \>_{\mu} 
\;=\; \inf_F \, \<  (-L) F \,,\, F \>_{\mu}\;,
\end{equation}
where the infimum is carried over all functions $F$ which are equal to
$1$ at $A$ and $0$ at $B$.  Taking $H=-F$ in the variational formula
\eqref{28} we obtain that
\begin{equation*}
\Cap (A,B)\,\ge\, \inf_F \<  (-L) F \,,\, F \>_{\mu} \;.
\end{equation*}

The next result follows from the previous observation, \eqref{29} and
Lemma \ref{s04}.

\begin{lemma}
\label{s15}
For two disjoint subsets $A$, $B$ of $E$, 
\begin{equation*}
\Cap^s (A,B)\,\le\, \Cap (A,B)\;.
\end{equation*}
\end{lemma}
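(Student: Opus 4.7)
The plan is to deduce the inequality first when $B^c$ is finite, by exploiting the variational formula \eqref{28}, and then to remove the finiteness hypothesis via the approximation in Lemma \ref{s04}.

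First I would treat the case where $B^c$ is finite. Take any function $F$ equal to $1$ on $A$ and $0$ on $B$, and plug in the test function $H=-F$ into the supremum in \eqref{28}. A brief computation shows that
\begin{equation*}
2\<L^*F,-F\>_\mu - \<-F,(-S)(-F)\>_\mu \;=\; 2\<F,(-L)F\>_\mu - \<F,(-S)F\>_\mu\;.
\end{equation*}
Since the antisymmetric part $A=(1/2)(L-L^*)$ satisfies $\<F,AF\>_\mu=0$ for every real $F$ in its domain, one has $\<F,(-L)F\>_\mu=\<F,(-S)F\>_\mu$, so the displayed expression reduces to $\<F,(-S)F\>_\mu$. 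Hence for every admissible $F$,
\begin{equation*}
\sup_H\Big\{2\<L^*F,H\>_\mu-\<H,(-S)H\>_\mu\Big\}\;\ge\;\<F,(-S)F\>_\mu\;,
\end{equation*}
and taking the infimum over $F$ together with \eqref{29} yields
\begin{equation*}
\Cap(A,B)\;\ge\;\inf_F\,\<F,(-S)F\>_\mu\;=\;\Cap^s(A,B)\;.
\end{equation*}

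To remove the hypothesis that $B^c$ is finite, I would fix an increasing exhaustion $\{K_n\}$ of $E$ by finite sets and set $A_m=A\cap K_m$, $B_n=B\cup K_n^c$, as in Lemma \ref{s04}. For each pair $m\le n$ the set $B_n^c\subset K_n$ is finite, so the previous step applies and gives $\Cap^s(A_m,B_n)\le\Cap(A_m,B_n)$. Lemma \ref{s04} applies to both the original chain and its reversible version $X^s_t$ (the statement is purely about capacities of Markov processes), so letting first $n\to\infty$ and then $m\to\infty$ on both sides produces
\begin{equation*}
\Cap^s(A,B)\;=\;\lim_{m\to\infty}\lim_{n\to\infty}\Cap^s(A_m,B_n)
\;\le\;\lim_{m\to\infty}\lim_{n\to\infty}\Cap(A_m,B_n)\;=\;\Cap(A,B)\;,
\end{equation*}
which is the desired inequality.

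The argument is essentially a one‑line consequence of plugging $H=-F$ into the new Dirichlet principle, so there is no real obstacle. The only point requiring a little care is the justification for passing to the limit in both $\Cap$ and $\Cap^s$ simultaneously, but since Lemma \ref{s04} is proved in the generality of arbitrary Markov chains with a stationary measure, it applies to the symmetrized chain $X^s_t$ (whose stationary measure is again $\mu$) without modification.
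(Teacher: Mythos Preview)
Your proof is correct and follows essentially the same route as the paper: plug $H=-F$ into the variational formula \eqref{28} to get $\Cap(A,B)\ge\inf_F\<(-L)F,F\>_\mu=\Cap^s(A,B)$ when $B^c$ is finite, and then invoke Lemma~\ref{s04} for the general case. The paper's argument is slightly terser (it does not spell out the identity $\<F,(-L)F\>_\mu=\<F,(-S)F\>_\mu$), but the substance is the same.
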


Recall that a generator $L$ satisfies a sector condition with constant
$C_0$ if for every $f$, $g$ in the domain of the generator,
\begin{equation*}
\< Lf, g\>_\mu^2 \;\le\; C_0 \, \< (-L) f, f\>_\mu \, \< (-L)g ,
g\>_\mu\;. 
\end{equation*}
Next result, whose proof is presented at the end of Section
\ref{sec2}, shows that if the generator $L$ satisfies a sector
condition, we may estimate the capacity between two sets by the
capacity associated to the symmetric part of the generator.

\begin{lemma}
\label{s11}
Suppose that the generator $L$ satisfies a sector condition with
constant $C_0$. Then, for every pair of disjoint subsets $A$, $B$ of
$E$, 
\begin{equation*}
\Cap (A,B) \;\le\; C_0 \, \Cap^s (A,B)\;.
\end{equation*}
\end{lemma}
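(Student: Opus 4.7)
The plan is to deduce the lemma directly from the variational formula \eqref{28}. I would first reduce to the case $B^c$ finite, since Lemma \ref{s04} expresses $\Cap(A,B)$ as an iterated limit of $\Cap(A_m,B_n)$ with $B_n^c$ finite; applying the same lemma to the reversible process generated by $S$ (which has $\mu$ as a stationary measure, as $S\mathbf{1}=0$) gives the analogous approximation for $\Cap^s(A,B)$, so it suffices to establish the inequality when $B^c$ is finite and then pass to the limit.

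Assume then that $B^c$ is finite, and fix an admissible $F$ (equal to $1$ on $A$, $0$ on $B$), which has finite support. I would like to apply the sector condition to $F$ and the inner test function $H$, but $H$ need not lie in $L^2(\mu)$ since it is only required to be constant on the possibly infinite set $B$. The key observation is that $S\mathbf{1}=0$ and $\langle L^*F,\mathbf{1}\rangle_\mu = \langle F,L\mathbf{1}\rangle_\mu = 0$, so the functional $2\langle L^*F,H\rangle_\mu - \langle H,(-S)H\rangle_\mu$ is invariant under shifting $H$ by a constant. Restricting to $H$ that vanishes on $B$ therefore does not change the supremum, and such $H$ automatically has finite support and belongs to the domain of the generator.

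For such $F$ and $H$, the sector condition applied with $f=H$, $g=F$ yields
\[
\langle L^*F,H\rangle_\mu^2 \;=\; \langle F,LH\rangle_\mu^2 \;\le\; C_0\,\langle F,(-L)F\rangle_\mu\,\langle H,(-L)H\rangle_\mu \;=\; C_0\,\langle F,(-S)F\rangle_\mu\,\langle H,(-S)H\rangle_\mu,
\]
since $\langle u,(-L)u\rangle_\mu = D(u) = \langle u,(-S)u\rangle_\mu$ for every $u$ in the domain. Combining this with the elementary inequality $2ab \le a^2 + b^2$, applied to $a = \sqrt{C_0\,\langle F,(-S)F\rangle_\mu}$ and $b = \sqrt{\langle H,(-S)H\rangle_\mu}$, gives
\[
2\langle L^*F,H\rangle_\mu \,-\, \langle H,(-S)H\rangle_\mu \;\le\; C_0\,\langle F,(-S)F\rangle_\mu
\]
uniformly in $H$. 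Taking the supremum over $H$ and then the infimum over $F$, Theorem \ref{s10} together with \eqref{29} yields $\Cap(A,B)\le C_0\,\Cap^s(A,B)$ when $B^c$ is finite, and the general case follows by the limiting argument of the first paragraph. I do not foresee a serious obstacle here; the only mildly delicate point is the shift trick that reduces the admissible $H$ to functions of finite support, which is precisely what makes the sector condition applicable.
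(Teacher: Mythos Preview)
Your proposal is correct and follows essentially the same route as the paper: apply the sector condition inside the variational formula of Theorem~\ref{s10}, bound the supremum over $H$ by $C_0\,\langle F,(-S)F\rangle_\mu$, take the infimum over $F$ using \eqref{29}, and extend to general $B$ via Lemma~\ref{s04}. The only difference is cosmetic: you spell out the shift trick to force $H$ to vanish on $B$ (so that $H$ has finite support), whereas the paper leaves this implicit, having already recorded it as Remark~\ref{s19}.
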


\subsection{Flows in finite state spaces}

We have seen that one can reduce capacity computations to the case
when $B^c$ is finite.  By identifying $B$ with a single point (this
will be rigorously done in the next section) we can then restrict
ourselves to the case of a finite space $E$.  We then assume in this
subsection that the $E$ is \emph{finite}. In this case the stationary
measure $\mu$ is unique up to multiplicative constants. Define the
(generally asymmetric) conductances
\begin{equation*}
c(x,y) \;=\; \mu(x) \, r(x,y)\;, \quad 
c^*(x,y) \;=\; \mu(x) \, r^*(x,y)\;, \quad 
x\not = y\in E\;.
\end{equation*}
Note that $c(x,y) = c^*(y,x)$. Let $c_s(x,y)$, $c_a(x,y)$, be the
symmetric and the asymmetric parts of the conductances:
\begin{equation*}
c_s(x,y) \;=\; (1/2) \{c(x,y) + c^*(x,y)\}\;, \quad
c_a(x,y) \;=\; (1/2) \{c(x,y) - c^*(x,y)\} \;, 
\end{equation*}
for $ x\not = y\in E$.  Clearly, $c_s(x,y) = c_s(y,x)$ and $c_a(x,y) =
- c_a(y,x)$. The symmetric conductances $c_s(x,y)$ are the
conductances of the reversible Markov process associated to the
generator $S$.

Denote by $\mc E$ the set of oriented edges or arcs of $E$: $\mc E=
\{(x,y)\in E\times E : c_s(x,y)>0\}$. For an oriented edge $e=(x,y)\in
\mc E$, let $e^-=x$ be the tail of the arc $e$ and let $e^+=y$ be its
head. We call {\sl flow} any anti-symmetric function $\varphi:\mc E\to
\bb R$.  Denote by $\mc F$ the set of flows endowed with the scalar
product
\begin{equation}
\label{42}
\<\varphi , \psi\> \;=\; \frac 12 \sum_{(x,y)\in \mc E}
\frac 1{c_s(x,y)}\, \varphi (x,y)\, \psi (x,y)\;,
\end{equation}
and let $\Vert \,\cdot\,\Vert$ be the norm associated to this scalar
product. 

Denote by $(\text{\rm div } \varphi)(x)$, $x\in E$, the {\sl divergence} of
the flow $\varphi$ at $x$:
\begin{equation*}
(\text{\rm div } \varphi) (x) \;=\; \sum_{y:(x,y)\in \mc E} 
\varphi (x,y) \;, \quad x\in E\;.
\end{equation*}
A flow $\varphi$ whose divergence vanishes at all sites, $({\rm div }
\varphi) (x)=0$ for all $x\in E$, is called a \emph{divergence free}
flow. An important example of such a divergence free flow
is $c_a$.

For a function $f:E\to \bb R$, let $\Psi_f (x,y) = c_s(x,y) [f(x) -
f(y)]$ be the gradient flow associated to $f$. Clearly, $\Psi_f$
belongs to $\mc F$ and 
\begin{equation}
\label{10}
\Vert \Psi_f \Vert^2 \; =\;
\< \Psi_f , \Psi_f\> \; =\;  \<(-L)f \,,\, f\>_\mu\;. 
\end{equation}
Let $\mc G = \{\Psi_f \,|\, f :E \to \bb R \}\subset\mc F$. We refer
to $\mc G$ as the set of \emph{gradient flows}.

A finite sequence $\gamma = (x_0, \dots, x_n=x_0)$ of sites in $E$
which starts and ends at the same site is called a \emph{cycle} if
$(x_i,x_{i+1})$ is an arc for each $0\le i\le n-1$ and if $x_i\not =
x_j$ for $0\le i < j < n$. An arc $e$ is said to belong to a cycle
$\gamma = (x_0, \dots, x_n=x_0)$ if $e=(x_i,x_{i+1})$ for some $0\le
i<n$.  We associate to a cycle $\gamma = (x_0, \dots, x_n=x_0)$ the
flow $\chi_\gamma:\mc E\to \bb R$ defined by
\begin{equation}
\label{40}
\chi_\gamma  
\;=\; \sum_{i=0}^{n-1} \{ \delta_{(x_i,x_{i+1})} -
\delta_{(x_{i+1},x_i)} \}\;.
\end{equation}
Denote by $\mc C$ the subspace of $\mc F$ spanned by flows associated
to cycles.

A flow $\varphi\in\mc C$ associated to a cycle has no divergence:
\begin{equation*}
(\text{div } \varphi)(x) \;=\; 0 \quad x\in E\;.
\end{equation*}
Also, $\varphi$ is a gradient flow if and only if $\varphi$ is
orthogonal to all cycle flows.  In other words, we have
\begin{equation}
\label{31}
\mc F = \mc G \oplus \mc C\;, \quad
\mc G \perp \mc C\;.
\end{equation}
In addition, a flow $\varphi$ that is orthogonal to all gradient flows
satisfies, $({\rm div} \varphi)(x_0) = 0$ for all $x_0$ in $E$,
because $({\rm div} \varphi)(x_0) = \<\Psi_f, \varphi\>$ for the
function $f$ defined by $f (x)= \delta_{x_0,x}$. This proves that $\mc
C$ is the set of all divergence free flows.

Inspired by the computation of the current through an arc $(x,y)$,
presented in \eqref{38} below, for a function $f:E\to \bb R$, denote
by $\Phi_f : \mc E\to \bb R$ the flow defined by
\begin{equation}
\label{30}
\Phi_f (x,y) \;=\; f(x) \, c(x,y) \;-\; f(y) \, c(y,x) \;.
\end{equation}
In Section \ref{sec3} we prove the following result.

\begin{theorem}
\label{s02}
For any disjoint and non-empty sets $A$, $B\subset E$,
\begin{equation*}
\Cap (A,B) \;=\; \inf_f \, \inf_\varphi \,
\Vert \Phi_f - \varphi \Vert^2 \;,
\end{equation*}
where the first infimum is carried over all functions $f:E\to \bb R$
which are equal to $1$ on the set $A$ and $0$ on the set $B$, and the
second infimum is carried over all flows $\varphi\in \mc F$ such that
\begin{equation*}
(\dv \varphi)(x) = 0 \;, \quad x\in (A\cup B)^c\;, \quad
\sum_{a\in A} (\dv \varphi)(a) = 0\;, \quad
\sum_{b\in B} (\dv \varphi)(b) = 0\;.
\end{equation*}
\end{theorem}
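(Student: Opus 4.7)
The plan is to recast the variational formula of Theorem \ref{s10} as an optimization over flows by orthogonal projection in the Hilbert space $(\mc F,\<\cdot,\cdot\>)$. Two preliminary identities drive the reduction. First, using $c(y,x) = \mu(x)r^*(x,y)$ and $\sum_y c(x,y) = \mu(x)\lambda(x)$, one computes
\begin{equation*}
(\dv \Phi_F)(x) \;=\; F(x)\,\mu(x)\lambda(x) \;-\; \mu(x)\sum_{y} r^*(x,y)\,F(y) \;=\; \mu(x)\,(-L^*F)(x)\;,
\end{equation*}
so the divergence of $\Phi_F$ encodes the same quantity $L^*F$ that appears in Theorem \ref{s10}. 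Second, antisymmetrizing the definition of $\Psi_h$ yields
\begin{equation*}
\<\Psi_h,\eta\> \;=\; \sum_{x\in E} h(x)\,(\dv \eta)(x) \qquad\text{for every } h:E\to\bb R,\; \eta\in\mc F\;.
\end{equation*}

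Let $\mc C_{A,B}$ denote the space of flows admissible in Theorem \ref{s02}. The second identity shows that $\Psi_h \in \mc C_{A,B}^\perp$ iff $h$ is constant on $A$ and constant on $B$: the admissibility constraints let $(\dv \varphi(a))_{a\in A}$ and $(\dv\varphi(b))_{b\in B}$ be arbitrary vectors with vanishing respective sums, and the orthogonal complement of zero-sum vectors is the space of constants. Combined with the Hodge decomposition \eqref{31} $\mc F = \mc G\oplus\mc C$ (and with $\mc C\subset\mc C_{A,B}$, which forces the $\mc C$-component of any vector in $\mc C_{A,B}^\perp$ to vanish), this identifies $\mc C_{A,B}^\perp$ with the subspace $\mc G_{A,B}$ of gradient flows of such functions $h$.

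For any admissible $F$, orthogonal projection onto $\mc G_{A,B}$ then gives
\begin{equation*}
\inf_{\varphi\in\mc C_{A,B}}\Vert \Phi_F - \varphi\Vert^2 \;=\; \sup_{h}\bigl\{2\<\Phi_F,\Psi_h\> - \Vert\Psi_h\Vert^2\bigr\}\;,
\end{equation*}
with $h$ ranging over functions constant on $A$ and on $B$. Using the divergence identity and \eqref{10}, one obtains $\<\Phi_F,\Psi_h\> = -\<L^*F,h\>_\mu$ and $\Vert\Psi_h\Vert^2 = \<h,(-S)h\>_\mu$, so substituting $H = -h$ turns the right-hand side into $\sup_H\{2\<L^*F,H\>_\mu - \<H,(-S)H\>_\mu\}$, which is exactly the inner supremum appearing in Theorem \ref{s10}. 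Taking the infimum over $F$ and invoking Theorem \ref{s10} (valid since $E$, hence $B^c$, is finite) yields $\Cap(A,B)$. The main obstacle is pinning down $\mc C_{A,B}^\perp$ precisely, since the admissibility conditions constrain only the total divergence of $\varphi$ on $A$ and on $B$; this step rests on the realizability of every zero-sum divergence profile by some flow on the connected graph $\mc E$, the rest being a projection-and-substitution calculation in the finite-dimensional space $\mc F$.
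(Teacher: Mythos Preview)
Your argument is correct and takes a genuinely different route from the paper's. The paper proves Theorem \ref{s02} by first establishing the singleton case (Lemma \ref{s09}) via Lemma \ref{s03}, which identifies $\inf_{\varphi\in\mc C}\Vert\Phi_f-\varphi\Vert^2$ with $\<L^*f,(-S)^{-1}L^*f\>_\mu$, and then reduces the general case to singletons by a collapsing argument on flows (equation \eqref{41}): one shows directly, with Schwarz on one side and an explicit lift of flows on the other, that the variational problem on $E$ with the divergence constraints of the theorem coincides with the variational problem on the collapsed space $\oE_A$. Your approach bypasses this flow-level collapsing entirely: you compute the orthogonal complement $\mc C_{A,B}^\perp$ in $\mc F$ once and for all, identify it with the gradient flows $\mc G_{A,B}$ of functions constant on $A$ and on $B$, and then recognize the resulting projection formula as the inner supremum of Theorem \ref{s10}. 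The key extra observation you need, and which the paper does not isolate, is that $\mc C_{A,B}^\perp=\mc G_{A,B}$; the containment $\mc G_{A,B}\subset\mc C_{A,B}^\perp$ is immediate from your divergence pairing, and the reverse uses $\mc C\subset\mc C_{A,B}$ together with path flows between pairs of points in $A$ (resp.\ $B$) to force constancy. Your route is shorter and more conceptual for the variational identity itself; the paper's route, on the other hand, delivers the explicit optimizers \eqref{33} for free, which your projection argument does not directly produce. Note that you are still relying on the collapsing machinery through Theorem \ref{s10}, whose proof in the paper goes via Lemma \ref{s14}; what you avoid is the second, flow-specific collapsing step.
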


In the case where $A$ and $B$ are singletons, the second infimum is
carried over all divergence free flows. Hence, in the case of
singletons, the infimum corresponds to a projection over the space of
gradient flows.

In the last section of this article, when we shall estimate some
capacities among singletons, the divergence free flow $\varphi(x,y) =
c_a(x,y)$, $(x,y)\in\mc E$, will be used repeatedly to obtain upper
bounds.

\subsection{Transient Markov processes} 

Assume in this subsection that the irreducible Markov process $\{X_t
\,|\, t\ge 0\}$ is transient, and denote by $G(x,y)$ its Green
function:
\begin{equation*}
G(x,y) \;=\; \bb E_{x} \Big[ \int_0^\infty \mb 1\{ X_t = y\} \, dt
\Big]\; .
\end{equation*}
Define the capacity of a state $x\in E$, denoted by $\Cap(x)$, as
\begin{equation*}
\Cap (x) \;=\; M(x) \, \bb P_x \big [ T^+_x = \infty\big]\;.
\end{equation*}
Since $G(x,x)^{-1} = \lambda (x) \bb P_x  [ T^+_x = \infty]$, we have
that 
\begin{equation}
\label{l09}
\Cap (x) \;=\; \mu(x)\, \frac 1{G(x,x)} \;\cdot
\end{equation}

Fix a finitely supported function $f: E\to \bb R$ such that $f(x) \not
= 0$, and let $F(y)= f(y)/f(x)$ so that $F(x)=1$.  By Definition
\ref{s06}, if $\{A_n \,|\, n\ge 1\}$ is a sequence of increasing,
finite sets such that $E= \cup_{n\ge 1} A_n$, 
\begin{equation*}
\Cap (x) \;=\; \lim_{n\to\infty} \Cap (x, A^c_n)\;.
\end{equation*}
Since $A_n$ is finite and since $F$ is finitely supported, with
$F(x)=1$, by Theorem \ref{s10},
\begin{equation*}
\Cap (x) \;\le \; \lim_{n\to\infty} \, \sup_{H\in \mf B_n} 
\Big\{ 2 \<  L^* F \,,\, H\>_{\mu}  \, - 
\, \< H , (- S) H\>_{\mu} \Big\} \;,
\end{equation*}
where $\mf B_n$ is the set of functions $H:E\to\bb R$ which vanish at
$A^c_n$. As $F(\,\cdot\,)= f(\,\cdot\,)/f(x)$, replacing $H$ by
$H'(\,\cdot\,)= H(\,\cdot\,)/f(x)$, we obtain that
\begin{equation*}
\Cap (x) \;\le \; \frac 1{f(x)^2} \lim_{n\to\infty} \, \sup_{H\in \mf B_n} 
\Big\{ 2 \<  L^* f \,,\, H\>_{\mu}  \, - 
\, \< H , (- S) H\>_{\mu} \Big\} \;.
\end{equation*}
In view of \eqref{l09}, we have proved the following result, which
generalizes a well known estimate in the context of reversible Markov
processes \cite[Proposition 5.23]{klo}, \cite[Lemma 2.1]{alp1}.

\begin{lemma}
\label{ls01}
Let $f:E\to\bb R$ be a finitely supported function and let $\{A_n \,|\,
n\ge 1\}$ be a sequence of increasing, finite sets such that $E=
\cup_{n\ge 1} A_n$. Then, for every $x\in E$,
\begin{equation*}
\mu(x)\, f(x)^2 \;\le\; G(x,x)\, 
\lim_{n\to\infty} \, \sup_{H\in \mf B_n} 
\Big\{ 2 \<  L^* f \,,\, H\>_{\mu}  \, - 
\, \< H , (- S) H\>_{\mu} \Big\} \;,
\end{equation*}
where $\mf B_n$ is the set of functions $H:E\to\bb R$ which vanish at
$A^c_n$.
\end{lemma}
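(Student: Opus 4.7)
The plan is to mirror the computation sketched in the paragraphs immediately preceding the statement. The case $f(x)=0$ is trivial, since the left-hand side vanishes while the right-hand side is non-negative (take $H\equiv 0$ in the supremum). So assume $f(x)\neq 0$ and set $F(y)=f(y)/f(x)$, so that $F(x)=1$ and $F$ is still finitely supported.

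First I would fix $n$ large enough that the support of $f$ lies in $A_n$, so that $F$ vanishes on $A_n^c$. Then $F$ is admissible as a test function for the infimum in Theorem \ref{s10} applied with $A=\{x\}$ and $B=A_n^c$ (whose complement $A_n$ is finite, as the theorem requires), giving
\begin{equation*}
\Cap(\{x\},A_n^c)\;\le\;\sup_H\Bigl\{2\<L^*F,H\>_\mu-\<H,(-S)H\>_\mu\Bigr\},
\end{equation*}
with supremum over $H$ constant on $\{x\}$ and constant on $A_n^c$. To rewrite this as a supremum over $\mf B_n$, I would decompose such an $H$ as $H=c\,\mb 1+H'$, where $c$ is the value of $H$ on $A_n^c$ and $H'$ is supported on $A_n$. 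The quadratic form $\<H,(-S)H\>_\mu$ depends only on the increments of $H$ and so equals $\<H',(-S)H'\>_\mu$; stationarity of $\mu$ (which also holds for $L^*$) gives $\<L^*F,\mb 1\>_\mu=\<F,L\mb 1\>_\mu=0$, the interchange of sums being legitimate because $F$ is finitely supported. Hence $\<L^*F,H\>_\mu=\<L^*F,H'\>_\mu$, and the supremum over such $H$ coincides with the supremum over $H'\in\mf B_n$.

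I would then undo the normalization by rescaling $H\mapsto H/f(x)$, which preserves $\mf B_n$ and extracts a factor $1/f(x)^2$:
\begin{equation*}
f(x)^2\,\Cap(\{x\},A_n^c)\;\le\;\sup_{H\in\mf B_n}\Bigl\{2\<L^*f,H\>_\mu-\<H,(-S)H\>_\mu\Bigr\}.
\end{equation*}
Letting $n\to\infty$, the identity $\Cap(x)=\lim_n\Cap(\{x\},A_n^c)$ (a consequence of Lemma \ref{s04}, already used in the paragraph above the statement) together with \eqref{l09}, $\Cap(x)=\mu(x)/G(x,x)$, yield $\mu(x)f(x)^2\le G(x,x)$ times the right-hand side of the desired inequality.

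The one delicate step is the reduction from the supremum over $H$ constant on the two sets to the supremum over $\mf B_n$; this rests on the invariance of the functional under an additive shift of $H$, verified in the second paragraph. Everything else is normalization followed by a monotone passage to the limit.
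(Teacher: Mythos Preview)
Your proof is correct and follows essentially the same route as the paper's argument (given in the paragraphs immediately preceding the lemma): normalize $f$ to $F$, plug $F$ into the variational formula of Theorem~\ref{s10} for $\Cap(\{x\},A_n^c)$, pass to the limit, and use \eqref{l09}. The only place where you add detail is the reduction of the supremum from functions constant on $\{x\}$ and on $A_n^c$ to functions in $\mf B_n$; the paper handles this implicitly via the shift-invariance noted in Remark~\ref{s19} (and the fact that any $H$ is trivially constant on the singleton $\{x\}$), whereas you spell out the decomposition $H=c\,\mb 1+H'$ explicitly.
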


\section{Collapsed Chains and Proof of Theorem \ref{s10}}
\label{sec2}

We start this section by assuming that $E$ is \emph{finite} and that
$\mu$ is the unique stationary probability measure. In the case where
the sets $A$ and $B$ are singletons, the proof of Theorem \ref{s10}
takes the following form.

\begin{lemma}
\label{s05}
Fix a pair of points $a\not = b$ in a finite set $E$. Then,
\begin{equation}
\label{07}
\Cap (\{a\}, \{b\}) \;=\; \inf_f \, \<f\,,\, L \, (-S)^{-1} L^*f\>_\mu\;,
\end{equation}
where the infimum is carried over all function $f:E\to \bb R$ such
that $f(a)=1$, $f(b)=0$. Moreover, the function $f_{a,b}$ which solves
the variational problem \eqref{07} is unique and equal to $(1/2) \{
V_{a,b} + V^*_{a,b}\}$, where $V_{a,b}$, $V^*_{a,b}$ are the harmonic
functions defined in \eqref{06}.
\end{lemma}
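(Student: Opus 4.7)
The plan is to minimize the quadratic form $\mc J(f) := \<f, L(-S)^{-1}L^{*}f\>_\mu$ on the affine subspace $\mc A := \{f : f(a)=1,\ f(b)=0\}$ by deriving the Euler--Lagrange conditions and then verifying them for the ansatz $(V_{a,b} + V^{*}_{a,b})/2$. The first step is to check that $\mc J$ is well defined and strictly convex on $\mc A$. Since $E$ is finite and the chain is irreducible, the symmetric operator $-S$ on $L^{2}(\mu)$ is positive semidefinite with kernel $\text{span}(1)$; moreover $\<L^{*}f, 1\>_\mu = \<f, L1\>_\mu = 0$, so $L^{*}f$ lies in the orthogonal complement of this kernel, where $-S$ is invertible. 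The operator $T := L(-S)^{-1}L^{*}$ is therefore well defined, self-adjoint and positive semidefinite, with $\ker T = \text{span}(1)$ by irreducibility of the adjoint chain. Since the tangent space $\{g : g(a)=g(b)=0\}$ of $\mc A$ contains no non-zero constant, $\<g, Tg\>_\mu > 0$ on non-zero tangent vectors, yielding existence and uniqueness of a minimizer $f^{*}$.

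Next, I would derive the Euler--Lagrange conditions. Differentiating $\mc J(f^{*}+\epsilon g)$ at $\epsilon=0$ for $g(a)=g(b)=0$ gives $(Tf^{*})(x) = 0$ for every $x \in E \setminus \{a,b\}$. Setting $H := (-S)^{-1}L^{*}f^{*}$, these read $LH = 0$ off $\{a,b\}$, together with $(-S)H = L^{*}f^{*}$ on $E$.

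The third step is to verify that $f^{*} = (V_{a,b} + V^{*}_{a,b})/2$ paired with $H = c - V_{a,b}$, where $c$ is the constant making $H \perp 1$ in $L^{2}(\mu)$, satisfies both conditions. Harmonicity of $H$ off $\{a,b\}$ is immediate from $LV_{a,b} = 0$ there. Expanding $-S = -(L+L^{*})/2$ and using $LV_{a,b} = 0$ and $L^{*}V^{*}_{a,b} = 0$ on $E\setminus\{a,b\}$, the equation $(-S)H = L^{*}f^{*}$ reduces to the pointwise identity
\begin{equation*}
(LV_{a,b})(x) \;=\; (L^{*}V^{*}_{a,b})(x)\;, \qquad x \in E\;.
\end{equation*}
Off $\{a,b\}$ both sides vanish. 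At $x=a$, a first-step decomposition gives $-\mu(a)(LV_{a,b})(a) = \mu(a)\lambda(a)\bb P_a[T_b < T_a^{+}] = \Cap(\{a\},\{b\})$, and the analogous computation on the time-reversed chain gives $-\mu(a)(L^{*}V^{*}_{a,b})(a) = \Cap^{*}(\{a\},\{b\})$; these coincide by \eqref{37} and Lemma \ref{s04}. The case $x=b$ then follows automatically, since both $LV_{a,b}$ and $L^{*}V^{*}_{a,b}$ are supported on $\{a,b\}$ and integrate to zero against $\mu$ (as $L1 = L^{*}1 = 0$).

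Finally, the value at the minimizer is $\mc J(f_{a,b}) = \<f_{a,b}, LH\>_\mu = -\<f_{a,b}, LV_{a,b}\>_\mu$, and because $LV_{a,b}$ is supported on $\{a,b\}$ with $f_{a,b}(a)=1$, $f_{a,b}(b)=0$, this collapses to $-\mu(a)(LV_{a,b})(a) = \Cap(\{a\},\{b\})$. The main conceptual hurdle is identifying the dual variable $H$ as an affine function of $V_{a,b}$; once this ansatz is in hand, the verification amounts to a pointwise rewriting of the already available symmetry $\Cap(A,B) = \Cap^{*}(B,A) = \Cap(B,A)$ from Lemma \ref{s04}.
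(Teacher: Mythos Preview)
Your proof is correct. The main difference from the paper's argument is the direction of the identification step. The paper proceeds \emph{forward}: starting from the abstract minimizer $f_{a,b}$, it introduces $W_{a,b}=S^{-1}L^*f_{a,b}+c_0$, observes that $W_{a,b}$ is $L$-harmonic off $\{a,b\}$ (hence a multiple $\lambda V_{a,b}$), and then pins down $\lambda=1$ by computing $\mc J(f_{a,b})$ in two different ways, namely as $-\mu(a)(LW_{a,b})(a)$ and as $-\mu(a)\,W_{a,b}(a)\,(LW_{a,b})(a)$. Only after that does it recover $f_{a,b}=\tfrac12(V_{a,b}+V^*_{a,b})$ from the relation $L^*f_{a,b}=SV_{a,b}$. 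In contrast, you proceed \emph{backward}: you write down the candidate $\tfrac12(V_{a,b}+V^*_{a,b})$ together with the dual variable $H=c-V_{a,b}$, verify the Euler--Lagrange system directly, and conclude by strict convexity.

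The trade-off is that the paper's argument is self-contained---the equality $\lambda=1$ falls out of a purely algebraic double-counting and does not invoke $\Cap=\Cap^*$---whereas your verification hinges on the identity $(LV_{a,b})(a)=(L^*V^*_{a,b})(a)$, which you reduce to $\Cap(\{a\},\{b\})=\Cap^*(\{a\},\{b\})$ via \eqref{37} and Lemma~\ref{s04}. Since Lemma~\ref{s04} is proved independently of Lemma~\ref{s05}, there is no circularity, and your route is arguably shorter and more transparent once the candidate is known. Note also that the paper later uses exactly your pointwise identity $L^*F=SV_{a,b}$ (with the same appeal to Lemma~\ref{s04}) in the proof of Lemma~\ref{s09}, so the two arguments converge downstream.
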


\begin{proof}
The operator $L(-S)^{-1}L^*$ restricted to the space of mean zero
functions is symmetric, strictly positive and bounded because the
state space is finite. There exists, in particular, a unique
function $f_{a,b}$ which solves the variational problem
\eqref{07}. Moreover, as $(-S)^{-1}$ is also strictly positive on the
the space of mean zero functions, there exists a strictly positive
constant $C_0$ such that
\begin{equation}
\label{02}
\<f_{a,b}\,,\, L \, (-S)^{-1} L^*f_{a,b}\>_\mu \;\ge\; 
C_0 \<L^* f_{a,b}\,,\, L^*f_{a,b}\>_\mu \;>\; 0\;.
\end{equation}
The previous expression can not vanish due to the boundary conditions
of $f_{a,b}$.

Since $f_{a,b}$ solves the variational problem \eqref{07},
\begin{equation*}
(L \, (-S)^{-1} L^*f_{a,b})(x) \;=\; 0 \;, \quad x\not = a,b\;.
\end{equation*}
Let $W_{a,b} = S^{-1} L^*f_{a,b} + c_0$, where $c_0$ is a constant
chosen for $W_{a,b}$ to vanish at $b$: $W_{a,b}(b)=0$. Since $LW_{a,b}
=0$ on $E\setminus \{a,b\}$, $W_{a,b}$ is a multiple of the harmonic
function $V_{a,b}$ introduced in \eqref{06}: $W_{a,b} = \lambda
V_{a,b}$, where $\lambda = W_{a,b}(a)$.

We claim that $\lambda=1$ so that $W_{a,b} = V_{a,b}$. Indeed, since
$W_{a,b}$ is harmonic on $E\setminus \{a,b\}$ and $f_{a,b}(a)=1$,
$f_{a,b}(b)=0$,
\begin{equation*}
\<f_{a,b}\,,\, L \, (-S)^{-1} L^*f_{a,b}\>_\mu \;=\; 
\<f_{a,b}\,,\, (-L) \, W_{a,b}\>_\mu \;=\; - \mu(a) \, (L \,
W_{a,b})(a) \;.
\end{equation*}
On the other hand, since $W_{a,b} -c_0 =S^{-1} L^*f_{a,b}$ and $S
S^{-1}$ is the identity,
\begin{equation*}
\begin{split}
& \<f_{a,b}\,,\, L \, (-S)^{-1} L^*f_{a,b}\>_\mu \;=\; 
\<L^* f_{a,b}\,,\, (-S)^{-1} L^*f_{a,b}\>_\mu \;=\; 
\< W_{a,b}\,,\, (-S) W_{a,b}\>_\mu \\
& \qquad \;=\; \< W_{a,b}\,,\, (-L) W_{a,b}\>_\mu   
\;=\; - \mu(a) \, W_{a,b}(a)\, (L \, W_{a,b})(a) \;,
\end{split}
\end{equation*}
where the last identity follows from the fact that $W_{a,b}$ is
harmonic on $E\setminus \{a,b\}$ and that $W_{a,b}(b)=0$. By
\eqref{02} and the two previous displayed formulas, $W_{a,b} (a)=1$ so
that $W_{a,b} = V_{a,b}$. Hence, by the last displayed formula and
\eqref{27},
\begin{equation*}
\<f_{a,b}\,,\, L \, (-S)^{-1} L^*f_{a,b}\>_\mu \;=\; 
\< V_{a,b}\,,\, (-L) V_{a,b}\>_\mu \;=\; \Cap (\{a\}, \{b\})\; ,
\end{equation*}
which concludes the proof of the first assertion of the lemma.

Denote by $f_{a,b}$ a function which solves the variational problem
\eqref{07}. We claim that $f_{a,b} = (1/2) \{ V_{a,b} + V^*_{a,b}
\}$. Indeed, since $W_{a,b}= V_{a,b}$ and since $V_{a,b}$ is
$L$-harmonic on $E\setminus \{a,b\}$, on this set $(1/2) L^*V_{a,b} =
SV_{a,b} = L^* f_{a,b}$. Furthermore, as $V^*_{a,b}$ is $L^*$-harmonic
on $E\setminus \{a,b\}$, we have in fact that $(1/2) L^* \{ V_{a,b} +
V^*_{a,b} \} = L^* f_{a,b}$. Hence,
\begin{equation*}
\begin{split}
& L^* (1/2)  \{ V_{a,b} + V^*_{a,b} \} \; =\; L^* f_{a,b} \text{ on }
E\setminus \{a,b\} \\
&\quad \text{and $(1/2) \{ V_{a,b} + V^*_{a,b} \} \; =\;
f_{a,b}$ on $\{a,b\}$}\;.
\end{split}
\end{equation*}
It follows from these two identities that $f_{a,b} = (1/2) \{ V_{a,b}
+ V^*_{a,b} \}$.
\end{proof}

To extend the previous result to the case where the sets $A$ and $B$
are not singletons, we define a Markov chain in which a set is
collapsed to a single state. Fix a subset $A$ of $E$, and let $\oE_A=
[E \setminus A] \cup \{\mf d\}$, where $\mf d$ is an extra site added
to $E$ to represent the collapsed set $A$. Denote by $\{\oX^A_t : t\ge
0\}$ the chain obtained from $X_t$ by collapsing the set $A$ to a
singleton. This is the Markov process on $\oE_A$ with jump rates
$\Or_A(x,y)$, $x$, $y\in \oE_A$, given by
\begin{equation}
\label{12}
\begin{split}
& \Or_A(x,y) \;=\; r(x,y) \;, 
\quad \Or_A(x,\mf d) \;=\; \sum_{z\in A} r(x,z) \;, 
\quad x, y \in E \setminus A\;, \\
& \quad \Or_A(\mf d,x) \;=\; \frac 1{\mu(A)} \sum_{y\in A} \mu(y) \, 
r(y,x) \;, \quad x \in E \setminus A \;. 
\end{split}
\end{equation}
The collapsed chain $\{\oX^A_t : t\ge 0\}$ inherits the irreducibility
from the original chain.

Denote by $\omu_A$ the probability measure on $\oE_A$ given by
\begin{equation}
\label{16}
\omu_A (\mf d) \;=\; \mu(A)\;, \quad 
\omu_A(x) \;=\; \mu(x)\;, \quad x \in E \setminus A \;.
\end{equation}
Since
\begin{equation*}
\sum_{y\not\in A , z\in A} c(y,z) \;=\; \sum_{y\not\in A , z\in A} c(z,y)\;,
\end{equation*}
one checks that $\omu_A$ is a stationary state, and
therefore the unique invariant probability measure, for the collapsed
chain $\oX^A_t$.

We may extend the concept of collapsed chain to the case in which more
than one set is collapsed to a singleton. One can proceed recursively,
collapsing first a set $A$ to a point $a\not \in E$, obtaining a
Markov chain in $(E\setminus A)\cup \{a\}$, and then collapsing a set
$B\subset E$, $B\cap A=\varnothing$, to a point $b\not\in E
\cup\{a\}$, obtaining a new Markov chain in $[E\setminus (A\cup
B)]\cup \{a,b\}$. One checks that the final process is the
same if we first collapse $B$ and then $A$. The rate $\Or_{A, B}(a,b)$
at which the collapsed chain jumps from $a$ to $b$ is given by
\begin{equation}
\label{14}
\Or_{A, B}(a,b) \;=\; \frac 1{\mu(A)} \sum_{z\in A} \mu(z)
\sum_{x\in B} \, r(z,x)\;.
\end{equation}

Denote by $\oL_A$ the generator of the chain $\{\oX^A_t : t\ge 0\}$
and by $\oL_A^{\ *}$ the adjoint of $\oL_A$ in $L^2(\omu_A)$. Recall
that we represent by $\{X^*_t : t\ge 0\}$ the adjoint of the chain
$X_t$ and by $L^*$ its generator. Let $\{\overline{X^*}^A_t : t\ge
0\}$ be the chain obtained from $X^*_t$ by collapsing the set $A$ to a
singleton and by $\overline{L^*}_A$ the generator of this process.
We claim that
\begin{equation}
\label{15}
\oL_A^{\ *} \;=\; \overline{L^*}_A\;.
\end{equation}
To prove this claim, denote by $\Or_A^{\, *}(x,y)$ the rates of the adjoint
of $\{\oX^A_t : t\ge 0\}$:
\begin{equation*}
\Or_A^{\, *}(x,y) \;=\; \frac{\omu_A(y) \, \Or_A(y,x)} {\omu_A(x)}\;,
\quad x\,,\, y \in \oE_A \;.
\end{equation*}
Let $r^*(x,y)$, $x$, $y\in E$, be the jump rates of the adjoint
process and let $\overline{r^*}_A(x,y)$, $x$, $y\in \oE_A$, be the
jump rates of its collapsed version.

In view of the previous displayed formula and by \eqref{12},
\eqref{16}, for $x$, $y\in E\setminus A$,
\begin{equation*}
\Or_A^{\, *}(x,y) \;=\; \frac{\mu(y) \, r(y,x)} {\mu(x)} \;=\; r^*(y,x) \;=\;
\overline{r^*}_A(x,y) \;. 
\end{equation*}
Furthermore, for $y\in E\setminus A$, since $\omu_A(\mf d) = \mu(A)$, by
\eqref{12}, 
\begin{equation*}
\begin{split}
\Or_A^{\, *}(\mf d,y) \; & =\; \frac{\mu(y) \, \Or_A(y,\mf d)} {\mu(A)}
\;=\; \frac{\mu(y) \sum_{z\in A} r(y,z)} {\mu(A)} \\
\;& =\; \frac{\sum_{z\in A} \mu(z) \, r^*(z,y)} {\mu(A)} \;=\;
\overline{r^*}_A(\mf d ,y)\;.
\end{split}
\end{equation*}
Finally, for $x\in E\setminus A$, by analogous reasons,
\begin{equation*}
\begin{split}
\Or_A^{\, *}(x,\mf d) \; & =\; \frac{\mu(A) \, \Or_A(\mf d,x)} {\mu (x)}
\;=\; \frac{\sum_{z\in A} \mu(z) r(z,x)} {\mu (x)} \\
\; & =\; \sum_{z\in A} r^*(x,z) \;=\; \overline{r^*}_A (x,\mf d)\;,
\end{split}
\end{equation*}
which proves \eqref{15}. It follows from this result that
\begin{equation}
\label{19}
\oS_A \;=\; (1/2) \big\{ \oL_A + \oL_A^{\, *} \big\}\;,
\end{equation}
if $\oS_A$ stands for the generator $S= (1/2) (L+L^*)$ collapsed on the
set $A$. 

Fix two functions $f$, $g:\oE_A\to \bb R$. Let $F$, $G:E\to \bb R$ be
defined by $F(x) = f(x)$, $x\in E\setminus A$, $F(z) = f(\mf d)$,
$z\in A$, with a similar definition for $G$. We claim that
\begin{equation}
\label{17}
\< \oL_A f \,,\, g \>_{\omu_A} \;=\; \< L F \,,\, G \>_{\mu} \;.
\end{equation}
Conversely, if $F$, $G:E\to \bb R$ are two functions constant over
$A$, \eqref{17} holds if we define $f$, $g: \oE_A\to \bb R$ by $f(x) =
F(x)$, $x\in E\setminus A$, $f(\mf d) = F(z)$ for some $z\in A$.

Fix two functions $f$, $g: \oE_A\to \bb R$. By definition of $\oL_A$, 
\begin{equation*}
\< \oL_A f \,,\, g \>_{\omu_A} \;=\; \sum_{x, y \in \oE_A} \omu_A(x) \,
\Or_A(x,y) \, [f(y)-f(x)]\, g(x)\;. 
\end{equation*}
In view of \eqref{12}, \eqref{16}, this expression is equal to
\begin{equation*}
\begin{split}
& \sum_{x\in E\setminus A} \mu (x) \Big\{ \sum_{y\in E\setminus A} 
r(x,y) \, [f(y) - f(x)] + \sum_{z\in A} r(x,z) \, [f(\mf d) - f(x)] \Big\}
\, g(x) \\
& \qquad \;+\; \sum_{y\in E\setminus A} \sum_{z\in A} \mu(z)\, r (z,y) \,
[f(y)-f(\mf d)] \, g(\mf d) \;.
\end{split}
\end{equation*}
Since $F(x)=f(x)$ for $x\in E\setminus A$, and $F(y)=f(\mf d)$ for
$y\in A$, with similar identities with $G$, $g$ replacing $F$, $f$,
the last sum is equal to
\begin{equation*}
\begin{split}
& \sum_{x\in E\setminus A} \mu (x) \Big\{ \sum_{y\in E\setminus A} 
r(x,y) \, [F(y) - F(x)] + \sum_{z\in A} r(x,z) \, [F(z) - F(x)] \Big\}
\, G(x) \\
&\qquad \;+\; \sum_{z\in A}  \sum_{y\in E\setminus A} 
\mu(z) \, r(z,y) \, [F(y)-F(z)] \, G(z)\;.
\end{split}
\end{equation*}
Since $F$ is constant on $A$, we may add to this expression
\begin{equation*}
\sum_{x\in A}  \sum_{y\in A} \mu(x) \, r(x,y) \, [F(y)-F(x)] \, G(x)
\end{equation*}
to obtain that the last displayed expression is equal to $\<LF ,
G\>_\mu$, which concludes the proof of the first assertion of
\eqref{17}. The second statement is obtained following the computation
in reverse order. 

It follows from \eqref{15} and \eqref{17} that
\begin{equation}
\label{18}
\< \overline {L^*}_A f \,,\, g \>_{\omu_A} \;=\; \< L^* F \,,\, G \>_{\mu} \;,
\quad \< \oS_A f \,,\, g \>_{\omu_A} \;=\; \< S F \,,\, G \>_{\mu} \;.
\end{equation}

The next assertion establishes the relation between collapsed chains
and capacities.  Fix two disjoint subsets $A$ and $B$ of $E$. Let
$\overline{E}_{A,B} = [E \setminus (A\cup B)] \cup \{a,b\}$, where
$a\not = b$ are states which do not belong to $E$.  Denote by
$\{\overline{X}^{A, B}_t : t\ge 0\}$ the chain in which the sets $A$,
$B$ have been collapsed to the states $a$, $b$. Let $\Or_{A,B} (x,y)$,
$\op_{A,B} (x,y)$, and $\overline{\lambda}_{A,B} (x)$, $x$, $y\in
\overline{E}_{A,B}$, be the jump rates, the jump probabilities, and
the holding rates, respectively, of the chain $\overline{X}^{A, B}_t$,
and denote by $\omu_{A,B}$ its unique invariant probability measure.

Denote by $\overline{\Cap}_{A,B}$ the capacity associated to the
collapsed chain. We claim that
\begin{equation}
\label{11}
\overline{\Cap}_{A,B}(\{a\}, \{b\}) \;=\; \Cap (A,B)\;.
\end{equation}
Denote by $\overline{\bb P}^{A,B}_x$, $x\in \overline{E}_{A,B}$, the
probability measure on $D(\bb R_+, \overline{E}_{A,B})$ induced by the
collapsed chain $\overline{X}^{A, B}_t$ starting from $x$. By Definition
\ref{s06},
\begin{equation*}
\begin{split}
\overline{\Cap}_{A,B} (\{a\}, \{b\}) \; & =\; \overline{M}_{A,B} (a)\, 
\overline{\bb P}^{A,B}_a \big[\, T^+_a > T^+_b\, \big] \\
\;& =\; \overline{M}_{A,B}(a)\, \sum_{x\in \overline{E}_{A,B}}  
\overline{p}_{A,B} (a,x) 
\, \overline{\bb P}^{A,B}_x \big[\, T_a > T_b\, \big]\;,
\end{split}
\end{equation*}
where $\overline{M}_{A,B} (x) = \omu_{A,B} (x)
\overline{\lambda}_{A,B} (x)$.  Since $\oM_{A,B}(a)\, \op_{A,B} (a,x)
= \omu_{A,B}(a)\, \Or_{A,B} (a,x)$ and since $\op_{A,B} (a,a) =0$, by
the explicit expression \eqref{12} for the rates of the collapsed
chain, the previous expression is equal to
\begin{equation*}
\begin{split}
&\omu_{A,B} (a) \sum_{x\in E\setminus [A\cup B]} \frac 1{\mu (A)} 
\sum_{z\in  A}  \mu (z) \, r(z,x) \, \overline{\bb P}^{A,B}_x 
\big[\, T_a > T_b\, \big] \\
& \quad \;+\; 
\omu_{A,B} (a) \, \Or_{A,B}(a,b) \, \overline{\bb P}^{A,B}_b 
\big[\, T_a > T_b\, \big]\;. 
\end{split}
\end{equation*}
By construction, $\overline{\bb P}^{A,B}_x [\, T_a > T_b\, ] = \bb P_x
[\, T_A > T_B \, ]$ for $x\in E \setminus [A\cup B]$, and
$\overline{\bb P}^{A,B}_b [\, T_a > T_b\,] = 1 = \bb P_x [\, T_A > T_B
\, ]$, $x\in B$. Hence, as $\omu_{A,B} (a) = \mu (A)$, by \eqref{14}
the last sum is equal to
\begin{equation*}
\sum_{x\in E\setminus A} \sum_{z\in A}  \mu (z) \, r(z,x) \,
\bb P_x \big[\, T_A > T_B \, \big] \;.
\end{equation*}
Since $\bb P_x [\, T_A > T_B \, ] = 0$, $x\in A$, and since $\mu (z)
\, r(z,x) = M(z) p(z,x)$, this expression is equal to
\begin{equation*}
\;=\; \sum_{z\in A} M(z) \, \bb P_z \big[\, T^+_A > T^+_B\, \big]
\;=\; \Cap (A,B)\;,
\end{equation*}
which concludes the proof of claim \eqref{11}.

\begin{lemma}
\label{s14}
Fix two disjoint subsets $A$, $B$ of a finite set $E$. Then,
\begin{equation*}
\Cap (A,B)\,=\, \inf_F \, \sup_H \Big\{ 2 \<  L^* F \,,\, H\>_{\mu}  \, - 
\, \< H , (- S) H\>_{\mu} \Big\} \;.
\end{equation*}
where the supremum is carried over all functions $H:E\to \bb R$ which
are constant at $A$ and $B$, and where the infimum is carried over all
functions $F$ which are equal to $1$ at $A$ and $0$ at $B$. Moreover,
the function $F_{A,B}$ which solves the variational problem for the
capacity is equal to $(1/2) \{ V_{A,B} + V^*_{A,B}\}$, where
$V_{A,B}$, $V^*_{A,B}$ are the harmonic functions defined in
\eqref{06}.
\end{lemma}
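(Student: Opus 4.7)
The strategy is to apply the singleton case, Lemma \ref{s05}, to the collapsed chain $\oX^{A,B}_t$ on $\oE_{A,B}$. Since $\oE_{A,B}$ is finite whenever $E$ is, and since $\ocap_{A,B}(\{a\},\{b\})=\Cap(A,B)$ by \eqref{11}, Lemma \ref{s05} applied to $\oX^{A,B}_t$ yields
\begin{equation*}
\Cap(A,B)\;=\;\inf_f\,\<f\,,\,\oL_{A,B}(-\oS_{A,B})^{-1}\overline{L^*}_{A,B}f\>_{\omu_{A,B}}\;,
\end{equation*}
with the infimum over $f:\oE_{A,B}\to\bb R$ satisfying $f(a)=1$, $f(b)=0$, and unique minimizer $f_{a,b}=(1/2)\{\oV_{a,b}+\overline{V^*}_{a,b}\}$. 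Here $\oV_{a,b}$ and $\overline{V^*}_{a,b}$ are the equilibrium potentials of the collapsed chain and of its adjoint, the latter being the collapsed version of $X^*_t$ by \eqref{15}.

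To recast this quadratic form as a sup--inf, I repeat the computation leading to \eqref{32} on the finite space $\oE_{A,B}$: since $\overline{L^*}_{A,B}f$ is orthogonal to the constants (because $\omu_{A,B}$ is stationary) and $-\oS_{A,B}$ is positive definite on mean-zero functions, the scalar product above equals $\sup_h\bigl\{2\<\overline{L^*}_{A,B}f,h\>_{\omu_{A,B}}-\<h,(-\oS_{A,B})h\>_{\omu_{A,B}}\bigr\}$ with $h$ ranging over all functions on $\oE_{A,B}$. I then lift to $E$ via \eqref{17}--\eqref{18}: a function $f$ (resp.\ $h$) on $\oE_{A,B}$ is in bijection with a function $F$ (resp.\ $H$) on $E$ that is constant on $A$ and on $B$, the boundary constraint $f(a)=1$, $f(b)=0$ translating to $F\equiv1$ on $A$, $F\equiv0$ on $B$, and the inner products $\<\overline{L^*}_{A,B}f,h\>_{\omu_{A,B}}$, $\<h,(-\oS_{A,B})h\>_{\omu_{A,B}}$ becoming $\<L^*F,H\>_\mu$, $\<H,(-S)H\>_\mu$ respectively. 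Substituting produces exactly the variational formula in the statement.

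For the identification of the minimizer, I observe that $V_{A,B}$ takes the value $1$ on $A$ and $0$ on $B$, so it descends to a function on $\oE_{A,B}$; by the obvious coupling between $X_t$ and $\oX^{A,B}_t$ up to the first entry into $A\cup B$, one has $\overline{\bb P}^{A,B}_x[T_a<T_b]=\bb P_x[T_A<T_B]$ for $x\in E\setminus(A\cup B)$, so the descent of $V_{A,B}$ is exactly $\oV_{a,b}$. The same argument applied to the adjoint chain (using \eqref{15}) identifies the descent of $V^*_{A,B}$ with $\overline{V^*}_{a,b}$. Consequently the collapsed minimizer $f_{a,b}$ lifts to $F_{A,B}=(1/2)\{V_{A,B}+V^*_{A,B}\}$ on $E$. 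The main obstacle in this plan is purely bookkeeping: one must verify that the bijection between functions on $\oE_{A,B}$ and functions on $E$ that are constant on $A\cup B$ respects the generators and inner products at play — but this is precisely the content of the identities \eqref{11}, \eqref{17} and \eqref{18}, so no additional work is required.
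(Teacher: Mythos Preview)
Your proposal is correct and follows essentially the same route as the paper: collapse $A$ and $B$ to singletons, apply Lemma~\ref{s05} there, rewrite the $(-\oS_{A,B})^{-1}$-norm as a supremum over $h$, and then lift everything back to $E$ via the identities \eqref{11}, \eqref{15}, \eqref{17}--\eqref{19}; the identification of the minimizer via the coupling $\overline{\bb P}^{A,B}_x[T_a<T_b]=\bb P_x[T_A<T_B]$ is likewise the paper's argument. The only cosmetic difference is that the paper makes the use of \eqref{15} and \eqref{19} explicit as a separate step (writing first $\mc S$ for the symmetric part of $\oL_{A,B}$ and then identifying it with $\oS_{A,B}$), whereas you invoke these identities implicitly when you write down the quadratic form.
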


\begin{proof}
Fix two disjoint subsets $A$, $B$ of $E$. By Lemma \ref{s05} and
identity \eqref{11},
\begin{equation}
\label{13}
\Cap (A,B) \;=\; \inf_f \, \< \oL_{A, B}^{\, *} f\,,\, \, 
(- \mc S)^{-1} \oL_{A, B}^{\, *} f\>_{\omu_{A, B}}\;,
\end{equation}
where $\oL_{A, B}$ is the generator of the chain $\{\overline{X}^{A,
  B}_t : t\ge 0\}$ introduced right after \eqref{18}, $\mc S$ is the
symmetric part of $\oL_{A, B}$, $\mc S = (1/2) (\oL_{A, B} + \oL_{A,
  B}^{\, *})$, and where the infimum is carried over all function
$f:\oE_{A, B}\to \bb R$ such that $f(a)=1$, $f(b)=0$.

By the variational formula for the norm induced by the operator $(-
\mc S)^{-1}$, the previous expression is equal to
\begin{equation*}
\inf_f \, \sup_h \Big\{ 2 \< \oL_{A, B}^{\, *}  f\,,\, h\>_
{\omu_{A, B}}  \, - \,
\< h, (-\mc S) h \>_{\omu_{A, B}} \Big\}\;,
\end{equation*}
where the supremum is carried over all functions $h:\oE_{A, B}\to \bb
R$. By \eqref{15}, $\oL_{A, B}^{\, *} = \overline{L^*}_{A, B}$, and by
\eqref{19}, $\mc S = \oS_{A, B}$, where $\oS_{A, B}$ is the generator
$S$ collapsed at $A$ and $B$. Hence, the previous displayed equation is
equal to
\begin{equation*}
\inf_f \, \sup_h \Big\{ 2 \<  \overline{L^*}_{A, B} f\,,\, h\>_{
  \omu_{A, B}}  \, - \,
\< h, (- \oS_{A, B}) h \>_{\omu_{A, B}} \Big\}\;.
\end{equation*}
Let $F$, $H:E\to \bb R$ be defined by $F(x) = f(x)$, $x\in E\setminus
(A\cup B)$, $F(z) = f(a)$, $z\in A$, $F(y) = f(b)$, $y\in B$, with a
similar definition for $H$. By \eqref{17}, \eqref{18}, the last
variational problem can be rewritten as
\begin{equation}
\label{20}
\inf_F \, \sup_H \Big\{ 2 \<  L^* F \,,\, H\>_{\mu}  \, - 
\, \< H , (- S) H\>_{\mu} \Big\} \;.
\end{equation}
where the supremum is carried over all functions $H:E\to \bb R$ which
are constant at $A$ and $B$, and where the infimum is carried over all
functions $F$ which are equal to $1$ at $A$ and $0$ at $B$. This
proves the first assertion of the lemma. 

To prove the second assertion of the lemma, recall from Lemma
\ref{s05} that
\begin{equation*}
\Cap (A,B) \;=\; \, \< \oL_{A, B}^{\, *} f_{a,b}\,,\, \, 
(- \mc S)^{-1} \oL_{A, B}^{\, *} f_{a,b}\>_{\omu_{A, B}}\;,
\end{equation*}
where $f_{a,b} = (1/2)\{\oV_{a,b} + \oV^*_{a,b}\}$ and $\oV_{a,b}$,
$\oV^*_{a,b}$ are the harmonic functions for the collapsed process. By
the first part of the proof, the right hand side is equal to
\begin{equation*}
\sup_H \Big\{ 2 \<  L^* F_{A,B} \,,\, H\>_{\mu}  \, - 
\, \< H , (- S) H\>_{\mu} \Big\} \;,
\end{equation*}
where the supremum is carried over all functions $H:E\to \bb R$ which
are constant at $A$ and $B$, and where $F_{A,B}(x) = f_{a,b}(x)$,
$x\in E\setminus (A\cup B)$, $F_{A,B}(z) = 1$, $z\in A$, $F_{A,B}(y) =
0$, $y\in B$. As we have already seen, by construction of the
collapsed process, for $x\in E\setminus (A\cup B)$,
\begin{equation*}
\oV_{a,b} (x) \;=\; \overline{\bb P}^{A,B}_x [\, T_a < T_b\, ] \;=\; 
\bb P_x [\, T_A < T_B \, ] \;=\; V_{A,B}(x)\;,
\end{equation*}
with a similar identity for $\oV_{a,b}^*$. In conclusion,
\begin{equation*}
\Cap (A,B) \;=\; \sup_H \Big\{ 2 \<  L^* F_{A,B} \,,\, H\>_{\mu}  \, - 
\, \< H , (- S) H\>_{\mu} \Big\} \;,
\end{equation*}
where $F_{A,B} = (1/2)\{V_{A,B} + V_{A,B}^*\}$, concluding the proof
of the lemma.  
\end{proof}

\begin{remark}
\label{s19}
The expression inside braces in the displayed formula of
Lemma \ref{s14} does not change if $H$ is replaced by $H+c$, where $c$
is a constant. We may therefore restrict the supremum to functions $H$
which vanish at $B$.  
\end{remark}

We finally turn to the case where $E$ is denumerable. Fix two disjoint
subsets $A$, $B$ of $E$ and suppose that $B^c \supset A$ is
finite. Similarly to what we did earlier in this section, we
define chain where the infinite set $B$ is collapsed to a state.

Denote by $\oX_t$ the Markov process on the finite set $B^c \cup \{\mf
d\}$, where $\mf d$ is an extra site added to $E$ to represent the
collapsed, possibly infinite, set $B$, whose rates $\Or (x,y)$, $x$, $y
\in B^c \cup \{\mf d\}$, are defined by
\begin{equation}
\label{23}
\begin{split}
& \Or(x,y) \;=\; r(x,y) \;, 
\quad \Or(x,\mf d) \;=\; \sum_{z\in B} r(x,z) \;, 
\quad x, y \in B^c\;, \\
& \quad \Or(\mf d,x) \;=\; \sum_{y\in B} \mu(y) \, 
r(y,x) \;, \quad x \in B^c \;. 
\end{split}
\end{equation}
Note that $\Or(\mf d,x)$ is finite because $\sum_{y\in E} \mu(y) \,
r(y,x)= M(x)<\infty$, as $\mu$ is a stationary state, and that
$\Or(\mf d,x) >0$ if there exists $z\in B$ such that $r(z,x)>0$.  In
particular, the collapsed chain $\{\oX_t : t\ge 0\}$ inherits the
irreducibility from the original chain. Moreover, since $\sum_{x\in
  B^c} \sum_{y\in B} \mu(y) r(y,x) = \sum_{x\in B} \sum_{y\in B^c}
\mu(y) r(y,x)$, $\omu (x) = \mu(x)$, $x\in B^c$, $\omu (\mf d) =1$ is a
stationary measure.

Let $\overline{\bb P}_x$, $x\in B^c\cup \{\mf d\}$, represent the
probability measure on the path space $D(\bb R_+, B^c\cup \{\mf d\})$
induced by the Markov process $\oX_t$ starting from $x$. Clearly, for
any $A\subset B^c$,
\begin{equation*}
\bb P_y \big [ T_{B} < T_A^+ \big] \;=\; 
\overline{\bb P}_y \big [ T_{\mf d} < T_A^+ \big] \;, \quad y\in B^c \;. 
\end{equation*}
Therefore, by \eqref{27}, for any $A\subset B^c$,
\begin{equation}
\label{24}
\Cap (A, B) \;=\; \sum_{y\in A} M(y)\, \bb P_y \big [ T_{B} < T_A^+ \big] \;=\; 
\sum_{y\in A} \oM(y)\,  \overline{\bb P}_y \big [ T_{\mf d} < T_A^+
\big] \;=\; \ocap (A, \mf d)\;,
\end{equation}
if $\ocap$ stands for the capacity of the collapsed chain.

Denote by $\oL$ the generator of the collapsed chain. Fix a pair of
functions $f$, $h: B^c \cup \{\mf d\} \to \bb R$ such that $h(\mf
d)=0$. Let $F$, $H: E\to \bb R$ be the functions defined by $F(x) =
f(x)$, $x\in B^c$, $F(z) = f(\mf d)$, $z\in B$, with a similar
definition for $H$. We claim that
\begin{equation}
\label{22}
\< \oL f, h\>_{\omu} \;=\; \< L F, H\>_{\mu} \;.
\end{equation}
Conversely, if $F$, $H: E\to \bb R$ are constant on the set $B$ and if
$H$ vanishes at $B$, \eqref{22} holds if $f$, $h: B^c \cup \{\mf d\}
\to \bb R$ are defined by $f(x) = F(x)$, $x\in B^c$, $f(\mf d) =
F(z)$, $z\in B$, with a similar definition for $h$.

To prove \eqref{22}, fix a pair of functions $f$, $h: B^c \cup \{\mf
d\} \to \bb R$ with the above properties. By definition of the
collapsed chain and since $h(\mf d)=0$, $\< \oL f, h\>_{\omu}$ is
equal to 
\begin{equation*}
\sum_{x,y\in B^c} \mu(x) \, r(x,y)\, h(x)
\, [f(y)-f(x)] \;+\; \sum_{x\in B^c} \mu(x) \, \Or(x,\mf d)\, h(x)
\, [f(\mf d)-f(x)] \;.
\end{equation*}
Since $\Or(x,\mf d) = \sum_{z\in B} r(x,z)$ and since $F$ is constant
over $B$, the second term is equal to
\begin{equation*}
\sum_{x\in B^c} \sum_{y\in B} \mu(x) \, r(x,y)\, h(x)
\, [f(\mf d)-f(x)] \;=\; \sum_{x\in B^c} \sum_{y\in B} \mu(x) \, r(x,y)\, H(x)
\, [F(y)-F(x)]\;.
\end{equation*}
Hence, adding the two terms,
\begin{equation*}
\< \oL f, h\>_{\omu} \;=\;  \sum_{x\in B^c} \sum_{y\in E} \mu(x) \, r(x,y)\, H(x)
\, [F(y)-F(x)] \;=\; \< L F, H\>_{\mu}  
\end{equation*}
because $H$ vanishes on $B$. This proves the first assertion of
claim. The converse one is proved by following the previous
computation in the reverse order.
\medskip

\noindent{\sl Proof of Theorem \ref{s10}}.  Fix two disjoint subsets
$A$, $B$ of $E$ and assume that $B^c$ is finite.  By \eqref{24}, $\Cap
(A, B) = \ocap (A, \mf d)$. On the other hand, and by Lemma \ref{s14}
and by Remark \ref{s19},
\begin{equation*}
\ocap (A, \mf d) \;=\; \inf_f \, \sup_h \Big\{ 2 \<  f \,,\, \oL
h\>_{\omu}  \, - \, \< h , (- \oL) h\>_{\omu} \Big\}\;,
\end{equation*}
where the supremum is carried over all functions $h:B^c \cup \{\mf d\}
\to \bb R$ which are constant at $A$ and vanish at $\mf d$, and where
the infimum is carried over all functions $f$ which are equal to $1$
at $A$ and $0$ at $\mf d$. Since $f$ vanishes at $\mf d$, by claim
\eqref{22}, the right hand side of the previous is equal to 
\begin{equation*}
\inf_F \, \sup_H \Big\{ 2 \<  L^* F \,,\, 
H\>_{\mu}  \, - \, \< H , (- L) H\>_{\mu} \Big\}
\end{equation*}
where the supremum is carried over all functions $H:E \to \bb R$ which
are constant at $A$ and vanish at $B$, and where the infimum is
carried over all functions $F$ which are equal to $1$ at $A$ and $0$
at $B$. The expression inside braces in the previous formula remains
unchanged if we replace $H$ by $H+c$, where $c$ is a constant. We may
therefore veil the assumption that $H$ vanishes at $B$. This proves
the first assertion of Theorem \ref{s10}.


By \eqref{24} and by Lemma \ref{s14},
\begin{equation*}
\Cap (A, B) \;=\; \ocap (A, \mf d) 
\;=\; \sup_h \Big\{ 2 \<  f_{A,\mf d} \,,\, \oL
h\>_{\omu}  \, - \, \< h , (- \oL) h\>_{\omu} \Big\}\;,
\end{equation*}
where $f_{A,\mf d} = (1/2) \{ V_{A,\mf d} + V^*_{A,\mf d}\}$, and
$V_{A,\mf d}$, $V^*_{A,\mf d}$ are the harmonic functions associated
to the collapsed process and to its adjoint. By \eqref{22}, 
\begin{equation*}
\Cap (A, B) 
\;=\; \sup_H \Big\{ 2 \<  L^* F_{A,B} \,,\, H\>_{\mu}  
\, - \, \< H , (- L) H\>_{\mu} \Big\}\;,
\end{equation*}
where $F_{A,B}(x)= f_{A,\mf d}(x)$, $x\in B^c$, $F_{A,B}(z)= 0$, $z\in
B$. By construction of the collapsed process, $V_{A,\mf d} = V_{A,B}$ and
$V^*_{A,\mf d} = V^*_{A,B}$ on $B^c$, where $V_{A,B}$ and $V^*_{A,B}$
are the harmonic functions of the original process. \qed

\medskip
\noindent{\sl Proof of Lemma \ref{s11}.} Fix two disjoint subsets $A$,
$B$ of $E$ and assume that $B^c$ is finite. By Theorem \ref{s10}, the
capacity $\Cap (A,B)$ is given by \eqref{20}. By the sector condition,
the expression inside braces in this formula is bounded by
\begin{equation*}
2 \sqrt{C_0} \<  (-S) F \,,\, F\>_{\mu}^{1/2} \<  (-S) H \,,\, H\>_{\mu}^{1/2}  
\, - \, \< H , (- S) H\>_{\mu}\;.
\end{equation*}
The supremum over $H$ is thus bounded by $C_0 \<  (-S) F \,,\,
F\>_{\mu}$. Therefore,
\begin{equation*}
\Cap (A,B) \;\le\; C_0 \inf_F \<  (-S) F \,,\, F\>_{\mu}\;,
\end{equation*}
where the infimum is carried over all functions $F$ equal to $1$ at
$A$ and $0$ at $B$. By definition of the capacity in the reversible
case, the right hand side is equal to $C_0 \Cap^s (A,B)$. This proves
the lemma in the case where the set $B^c$ is finite. To extend it to
the general case, it remains to apply Lemma \ref{s04}. \qed

\section{Flows and Proof of Theorem \ref{s02}}
\label{sec3}

We assume in this section that the state space $E$ is finite.  We
first prove Theorem \ref{s02} in the case where the sets $A$ and $B$
are singletons. The proof relies on an identity, established in Lemma
\ref{s03} below, which provides a variational formula for the norm
$\<f\,,\, \{[(-L)^{-1}]^s\}^{-1}f\>_\mu^{1/2}$.

Before stating this result, we start with an elementary
observation. We claim that
\begin{equation}
\label{39}
\text{two gradient flows $\Psi_f$, $\Psi_g$ are equal if and only 
if $f-g$ is constant}\;.
\end{equation}
Indeed, if the gradient flows are equal, since $\Psi_f- \Psi_g =
\Psi_{f-g}$, in view of \eqref{10}, $\<(-L) (f-g), (f-g)\>_\mu=0$
which implies that $f-g$ is constant. The converse is obvious.

Recall that we denote by $\mc C$ the set of divergence free flows and
by $\Phi_f$ the flow associated to a function $f:E\to \bb R$
introduced in \eqref{30}.

\begin{lemma}
\label{s03}
For every function $f:E\to \bb R$,
\begin{equation*}
\<f\,,\, \{[(-L)^{-1}]^s\}^{-1}f\>_\mu\;=\;
\< L^*f, (-S)^{-1} L^*f \>_\mu \;=\; \inf_{\varphi \in \mc C} 
\Vert \Phi_f - \varphi \Vert^2 \;.
\end{equation*}
\end{lemma}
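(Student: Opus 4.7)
The first equality follows immediately from the identity $L(-S)^{-1}L^{*} = \{[(-L)^{-1}]^{s}\}^{-1}$ cited from \cite[Section 2.5]{klo}, together with the adjoint relation $\langle f, L(-S)^{-1} L^{*} f\rangle_{\mu} = \langle L^{*} f, (-S)^{-1} L^{*} f\rangle_{\mu}$. The substantive part is the second equality, and the plan is to exploit the orthogonal decomposition $\mc F = \mc G \oplus \mc C$ from \eqref{31}.

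Since the infimum is taken over the closed subspace $\mc C$, a standard projection argument shows that the minimum is attained and equals $\|\Pi_{\mc G} \Phi_{f}\|^{2}$, where $\Pi_{\mc G}$ denotes the orthogonal projection onto $\mc G$. By definition of $\mc G$, there exists a function $g:E\to \bb R$, unique up to an additive constant in view of \eqref{39}, such that $\Pi_{\mc G} \Phi_{f} = \Psi_{g}$; equivalently, $\Phi_{f} - \Psi_{g}$ is divergence free. The key computation will therefore be to identify $g$ by matching divergences.

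First I would compute $(\dv \Phi_{f})(x) = \sum_{y} \{f(x)\,c(x,y) - f(y)\,c(y,x)\}$. Splitting the middle and using $c(y,x)=\mu(x) r^{*}(x,y)$ together with the fact that $\sum_{y}c_{a}(x,y)=0$ (which follows from the stationarity of $\mu$, since $\sum_{y}c(x,y) = \sum_{y}c(y,x)$), one obtains
\begin{equation*}
(\dv \Phi_{f})(x) \;=\; \mu(x)\,(-L^{*} f)(x)\;.
\end{equation*}
On the other hand, a direct computation gives $(\dv \Psi_{g})(x) = \mu(x)\,(-Sg)(x)$. Matching these forces $Sg = L^{*} f$, so (modulo the constant ambiguity, which is harmless since $L^{*} f$ and $Sg$ both have $\mu$-mean zero) $g = -(-S)^{-1} L^{*} f$.

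Finally, using \eqref{10} and the fact that $\<(-L) h, h\>_{\mu} = \<(-S) h, h\>_{\mu}$ for every $h$,
\begin{equation*}
\inf_{\varphi\in\mc C}\|\Phi_{f} - \varphi\|^{2} \;=\; \|\Psi_{g}\|^{2} \;=\; \<(-S)g, g\>_{\mu} \;=\; \<L^{*} f, (-S)^{-1} L^{*} f\>_{\mu}\;,
\end{equation*}
which is the desired identity. The only mildly delicate step is the divergence computation and the appeal to stationarity to kill the contribution of $c_{a}$; everything else is a clean orthogonal projection argument in the finite dimensional Hilbert space $(\mc F, \<\cdot,\cdot\>)$.
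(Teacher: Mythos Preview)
Your proof is correct and follows essentially the same route as the paper: decompose $\Phi_f$ via $\mc F=\mc G\oplus\mc C$, identify the gradient component by matching divergences to get $Sg=L^*f$ (the paper writes $W=S^{-1}L^*f+c_0$), and then read off the norm via \eqref{10}. The only cosmetic difference is that the paper leaves the divergence computation implicit and your sign convention $g=-(-S)^{-1}L^*f$ is just $S^{-1}L^*f$ rewritten.
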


\begin{proof}
Fix a function $f:E\to \bb R$.  Since $\Phi_f$ is a flow, by
\eqref{31} and by \eqref{39} there is a function $W:E\to\bb R$, unique
up to an additive constant, and a unique divergence free flow
$\Delta_f$ such that
\begin{equation*}
\Phi_f \;=\; \Psi_{W} \;+\; \Delta_f\;.
\end{equation*}
Computing the divergences of each flow we obtain that $L^* f = SW$ so
that $W=S^{-1}L^*f + c_0$ for some constant $c_0$. Therefore, since
$\Psi_W = \Psi_{W+c}$ for any constant $c$, $\Psi_{V}$, with $V =
S^{-1}L^*f$, is the the projection of the flow $\Phi_f$ on the space
of gradient flows. Moreover, by \eqref{10},
\begin{equation}
\label{34}
\< \Psi_V \,,\, \Psi_V \> \;=\; \< V, (-S) V \>_\mu
\;=\; \< L^*f, (-S)^{-1} L^*f \>_\mu\;,
\end{equation}
because $\< L^*f, 1 \>_\mu =0$ as $\mu$ is invariant. Furthermore,
since $\Psi_V$ is the projection of the flow $\Phi_f$ on the space of
gradient flows,
\begin{equation*}
\< \Psi_V \,,\, \Psi_V \> \;=\;
\inf_{\varphi \in \mc C} \< \Phi_f - \varphi \,,\, \Phi_f - \varphi
\>\;, 
\end{equation*}
which concludes the proof of the lemma.
\end{proof}

\begin{lemma}
\label{s09}
Fix a pair of points $a\not = b$ in $E$. Then,
\begin{equation*}
\Cap (\{a\}, \{b\}) \;=\; \inf_f \, \inf_{\varphi\in \mc C} 
\Vert \Phi_f - \varphi \Vert^2  \;,
\end{equation*}
where the infimum is carried over all functions $f:E\to \bb R$ such
that $f(a)=1$, $f(b)=0$.  Moreover, the infimum is uniquely attained
at
\begin{equation}
\label{33}
f = (1/2) \{ V_{a,b} + V^*_{a,b}\} \;, \quad
\varphi \;=\; (1/2) \, \big\{ \Phi_{V^*_{a,b}} - \Phi^*_{V_{a,b}}
\big\}\;,
\end{equation}
provided for a function $g:E\to \bb R$ we denote by $\Phi^*_g$ the
flow given by $\Phi^*_g (x,y) = g(x) c^*(x,y) - g(y) c^*(y,x)$.
\end{lemma}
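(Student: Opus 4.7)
The plan is to combine Lemma \ref{s05} and Lemma \ref{s03}. Lemma \ref{s05} gives
\[
\Cap(\{a\},\{b\}) \;=\; \inf_f \<L^* f, (-S)^{-1} L^* f\>_\mu ,
\]
where the infimum is carried over all $f:E\to\bb R$ with $f(a)=1$, $f(b)=0$, and identifies the unique minimizer as $f_{a,b} = (1/2)\{V_{a,b} + V^*_{a,b}\}$. Lemma \ref{s03}, applied to each such $f$, rewrites the quadratic form on the right-hand side as $\inf_{\varphi\in \mc C} \Vert \Phi_f - \varphi\Vert^2$. Chaining the two identities and exchanging the order of the infima yields the first assertion.

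To identify the minimizing pair I would invoke the decomposition constructed in the proof of Lemma \ref{s03}: for any $f$, the optimal $\varphi$ is $\Phi_f - \Psi_W$, where $W$ is any function satisfying $SW = L^* f$ (such a $W$ is unique up to an additive constant, which does not affect $\Psi_W$). The crucial link is that the proof of Lemma \ref{s05} already supplies the identity $SV_{a,b} = L^* f_{a,b}$: the function $W_{a,b} = S^{-1} L^* f_{a,b} + c_0$ was shown there to coincide with $V_{a,b}$, and since $Sc_0 = 0$, one obtains $SV_{a,b} = L^* f_{a,b}$ as an identity on all of $E$. Hence one may take $W = V_{a,b}$.

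The simplification of $\Phi_{f_{a,b}} - \Psi_{V_{a,b}}$ is then purely algebraic. The relations $c_s = (1/2)(c+c^*)$ and $c_s(x,y) = c_s(y,x)$ give the elementary identity $\Psi_g = (1/2)\{\Phi_g + \Phi^*_g\}$ for every $g:E\to\bb R$. Together with the linearity $\Phi_{f_{a,b}} = (1/2)\{\Phi_{V_{a,b}} + \Phi_{V^*_{a,b}}\}$, this produces
\[
\varphi \;=\; \Phi_{f_{a,b}} - \Psi_{V_{a,b}} \;=\; \tfrac12\bigl\{\Phi_{V^*_{a,b}} - \Phi^*_{V_{a,b}}\bigr\},
\]
which is precisely the formula claimed in \eqref{33}. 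Uniqueness of the pair $(f_{a,b},\varphi)$ follows from the uniqueness of $f_{a,b}$ in Lemma \ref{s05} together with the fact that, for a fixed $f$, the optimal $\varphi$ is the orthogonal projection of $\Phi_f$ onto the closed subspace $\mc C$. I do not anticipate any serious obstacle: the only non-routine ingredient is the identity $SV_{a,b} = L^* f_{a,b}$, and this is already in hand from Lemma \ref{s05}.
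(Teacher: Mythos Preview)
Your proposal is correct and follows the same architecture as the paper: chain Lemma~\ref{s05} with Lemma~\ref{s03} for the variational formula, then identify the optimal flow as $\Phi_{f_{a,b}}-\Psi_{V_{a,b}}$ and simplify. The only noteworthy difference is in how the key identity $SV_{a,b}=L^*f_{a,b}$ is obtained on all of $E$: you read it off directly from $V_{a,b}=W_{a,b}=S^{-1}L^*f_{a,b}+c_0$ (established in the proof of Lemma~\ref{s05}) by applying $S$, whereas the paper re-derives the identity at the boundary points $a,b$ via the symmetry $\Cap(\{a\},\{b\})=\Cap^*(\{a\},\{b\})$ from Lemma~\ref{s04}; your route is in fact slightly more economical.
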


\begin{proof}
The first assertion of the lemma follows from Lemmas \ref{s05} and
\ref{s03}. Moreover, the function $f$ which solves the variational
problem for the capacity coincides with the one which solves the
variational problem \eqref{07}. Hence, by Lemma \ref{s05}, $(1/2) \{
V_{a,b} + V^*_{a,b}\}$ is the unique functions which attains the minimum.
It remains to show that $(1/2) \, \{ \Phi_{V^*_{a,b}} -
\Phi^*_{V_{a,b}} \}$ is the optimal divergence free flow.

Let $F = (1/2) \{ V_{a,b} + V^*_{a,b}\}$. We claim that $(L^*F)(x) =
(SV_{a,b})(x)$ for all $x\in E$. For $x\not = a$, $b$, this identity
is obvious and has been derived in the proof of Lemma \ref{s05}. For
$x=a$, it reduces to the identity $\bb P^*_a[T^+_b < T^+_a] = \bb
P_a[T^+_b < T^+_a]$ which, in view of \eqref{27}, is equivalent to
$\Cap (\{a\},\{b\}) = \Cap^*(\{a\},\{b\})$. Since this identity is the
content of Lemma \ref{s04}, and since the same argument applies to
$x=b$, the claim is in force. In particular, by the proof of Lemma
\ref{s03}, $\Psi_{V_{a,b}}$ is the projection of the flow $\Phi_F$ on
the space of gradient flows, and there is a unique divergence free
flow $\Delta_F$ such that
\begin{equation*}
\Phi_F \;=\; \Psi_{V_{a,b}} \;+\; \Delta_F\;, \quad
\< \Phi_F - \Delta_F \,,\, \Phi_F - \Delta_F \> \;=\; 
\inf_{\varphi\in \mc C} \< \Phi_F - \varphi \,,\, \Phi_F - \varphi\>\;.
\end{equation*}
An elementary computations shows that $\Delta_F = \Phi_F -
\Psi_{V_{a,b}} = (1/2) \{ \Phi_{V^*_{a,b}} - \Phi^*_{V_{a,b}} \}$,
which completes the proof of the lemma.
\end{proof}

We may restate the previous lemma to obtain a variational formula for
the capacity in terms of the Dirichlet form.

\begin{lemma}
\label{s07}
Fix a pair of points $a\not = b$ in $E$. Then,
\begin{equation*}
\Cap (\{a\}, \{b\}) \;=\; \inf_V  D(V) \;,
\end{equation*}
where the infimum is carried over all functions $V:E\to\bb R$ such
that $\Psi_V$ is the orthogonal projection on the space of gradient
flows of some flow $\Phi_f$ with $f(a)=1$, $f(b)=0$. Moreover, the
infimum is uniquely attained, up to additive constants, at $V=V_{a,b}$.
\end{lemma}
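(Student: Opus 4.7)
The plan is to combine Lemma \ref{s09} with the projection interpretation developed in the proof of Lemma \ref{s03}. Starting from
\begin{equation*}
\Cap(\{a\},\{b\}) \;=\; \inf_f \inf_{\varphi\in\mc C} \Vert \Phi_f - \varphi\Vert^2,
\end{equation*}
where the outer infimum runs over $f$ with $f(a)=1$, $f(b)=0$, I use the orthogonal decomposition $\mc F = \mc G \oplus \mc C$ from \eqref{31} to write $\Phi_f = \Psi_{V_f} + \Delta_f$, with $V_f$ determined up to an additive constant by \eqref{39} and $\Delta_f\in\mc C$. Pythagoras gives $\inf_{\varphi\in\mc C}\Vert\Phi_f-\varphi\Vert^2 = \Vert \Psi_{V_f}\Vert^2$, and by \eqref{10} together with the elementary identity $\<V,(-L)V\>_\mu = \<V,(-S)V\>_\mu = D(V)$ (the antisymmetric part of $L$ contributing nothing to the quadratic form), this equals $D(V_f)$.

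Substituting back into the formula of Lemma \ref{s09}, the double infimum collapses to an infimum of $D(V)$ taken over those functions $V$ for which $\Psi_V$ is the orthogonal projection onto $\mc G$ of some $\Phi_f$ with $f(a)=1$, $f(b)=0$. That is precisely the admissible class in the statement, yielding the claimed equality.

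For the uniqueness up to additive constants, I would invoke the second assertion of Lemma \ref{s09}: the optimal pair $(f,\varphi)$ is uniquely given by $f=(1/2)\{V_{a,b}+V^*_{a,b}\}$ and $\varphi=(1/2)\{\Phi_{V^*_{a,b}}-\Phi^*_{V_{a,b}}\}$, and the proof there identifies $\Psi_{V_{a,b}}$ as the projection of $\Phi_f$ onto $\mc G$. Hence $V=V_{a,b}$ is a minimizer; any other minimizer $V'$ must yield the same projection, so $\Psi_{V'} = \Psi_{V_{a,b}}$, and \eqref{39} forces $V'-V_{a,b}$ to be constant. All the analytic content has already been carried out in Lemmas \ref{s03} and \ref{s09}, so no step should present a real obstacle — the proof is essentially a dictionary translation from squared flow norms to Dirichlet forms of the corresponding potentials.
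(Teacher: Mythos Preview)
Your argument is correct and is essentially the paper's proof recast in flow language: the paper starts from Lemma~\ref{s05} and invokes identity \eqref{34} to pass from $\langle L^*f,(-S)^{-1}L^*f\rangle_\mu$ to $D(V)$, whereas you start from Lemma~\ref{s09} (which is Lemma~\ref{s05} combined with Lemma~\ref{s03}) and read off the same identity via the orthogonal decomposition $\Phi_f=\Psi_{V_f}+\Delta_f$. For uniqueness the paper likewise traces back to the uniqueness in Lemma~\ref{s05}, while you use the equivalent uniqueness in Lemma~\ref{s09}; your sentence ``must yield the same projection'' should be unpacked as: any admissible minimizer $V'$ comes with some $g$ whose $\Phi_g$ projects to $\Psi_{V'}$, the pair $(g,\Phi_g-\Psi_{V'})$ then attains the double infimum of Lemma~\ref{s09}, and uniqueness there forces $g=F$ and hence $\Psi_{V'}=\Psi_{V_{a,b}}$.
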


\begin{proof}
By Lemma \ref{s05},
\begin{equation*}
\Cap (\{a\}, \{b\}) \;=\; \inf_f \< L^*f, (-S)^{-1} L^*f \>_\mu\;,
\end{equation*}
where the infimum is carried over all functions $f:E\to\bb R$ such
that $f(a)=1$, $f(b)=0$. To conclude the proof of the first assertion
of the lemma it remains to recall identity \eqref{34}.

To prove uniqueness of $V_{a,b}$, recall from the proof of Lemma
\ref{s09} that $S V_{a,b} = L^* F$, where $F= (1/2) \{ V_{a,b} +
V^*_{a,b}\}$. Hence, by the proof of Lemma \ref{s03}, $\Psi_{V_{a,b}}$
is the orthogonal projection of $\Phi_{F}$. Therefore, $D(V_{a,b}) \ge
\inf_V D(V)$. On the other hand, by \eqref{34} and since by Lemma
\ref{s05}, $F$ is the optimal function, $D(V_{a,b}) \le \inf_V
D(V)$. This shows that $V_{a,b}$ is optimal.

To prove uniqueness, suppose that $W$ is another optimal function, and
that $\Psi_W$ is the orthogonal projection on the space of gradient
flows of some flow $\Phi_g$ with $g(a)=1$, $g(b)=0$. By the optimality
of $W$ and by \eqref{34}
\begin{equation*}
\Cap (\{a\}, \{b\}) \;=\; D(W)\;=\; \< L^*g, (-S)^{-1} L^*g \>_\mu\;.
\end{equation*}
Hence, by the uniqueness of Lemma \ref{s05}, $g=F$, and by the proof
of Lemma \ref{s03}, $L^*F = SW$. Since $L^*F$ is also equal to $S
V_{a,b}$, we obtain that $S V_{a,b} = SW$, which implies that $V_{a,b}
-W$ is constant, as claimed.
\end{proof}

The flows $\Phi_{V^*_{a,b}}$ and $\Phi^*_{V_{a,b}}$ which appear in
the previous lemma have a simple probabilistic interpretation.  Denote
by $\{\bb X_n : n\ge 0\}$ the discrete time skeleton of the chain, and
recall that $M(x) = \mu(x) \lambda (x)$ is a stationary state for $\bb
X_n$, unique up to a multiplicative constant.  For $B\subset E$, let
$G_B$ be the Green function of the process killed at $B$:
\begin{equation*}
G_B(x,y) \;:=\; \bb E_x \Big[ \sum_{n=0}^{\tau_B-1} \mb 1\{\bb X_n = y\}
\Big]\;, 
\end{equation*}
where $\tau_B$ (resp. $\tau^+_B$) stands for the hitting time of
(resp. return time to) $B$ for the discrete time chain $\bb X_n$:
\begin{equation*}
\tau_B \;=\; \min\{n\ge 0 : \bb X_n \in B\}\;, \quad
\tau^+_B \;=\; \min\{n\ge 1 : \bb X_n \in B\}\;.
\end{equation*}
In the same way, $G^*_B$, $B\subset E$, stands for the Green function
of the time reversed chain killed at $B$.

Denote by $\bb P_x$, $x\in E$, the probability on path space $D(\bb
Z_+ , E)$ induced by the Markov chain $\{\bb X_n : n\ge 0\}$ starting
from $x$, and by $\theta_n$, $n\ge 0$, the time shift by $n$ units of
time.  Fix two disjoint subsets $A$, $B$ of $E$. By the last exit
decomposition, for every $x\in E$,
\begin{equation*}
\begin{split}
\bb P_x \big[ \tau_A < \tau_B \big] \; &=\;
\sum_{n\ge 0} \bb P_x \big[ \bb X_n\in A \,,\, n < \tau_B \,,\, \tau^+_B \circ
\theta_n < \tau^+_A \circ \theta_n \big] \\
&=\; \sum_{a\in A} \sum_{n\ge 0} \bb P_x \big[ \bb X_n=a \,,\, n < \tau_B
\big] \, \bb P_a \big[ \tau^+_B  < \tau^+_A \big] \\
&=\; \sum_{a\in A} G_B(x,a) \, \bb P_a \big[ \tau^+_B  < \tau^+_A
\big]\;. 
\end{split}
\end{equation*}
Since $M(x) G_B(x,y) = M(y) G^*_B(y,x)$, it follows from the previous
identity 
\begin{equation}
\label{03}
V_{A,B}(x)\;=\; \bb P_x \big[ \tau_A < \tau_B \big]  
\;=\; \sum_{a\in A} \frac 1{M(x)} \, G^*_B(a,x) \, M(a)\,
\bb P_a \big[ \tau^+_B  < \tau^+_A \big]\;.
\end{equation}

Denote by $\nu_{A,B}$ the harmonic measure, also called the normalized
charge distribution,
\begin{equation*}
\nu_{A,B} (a) \;=\; \frac 1{\Cap (A,B)}\, 
M(a)\, \bb P_a \big[ \tau^+_B  < \tau^+_A \big]\;.
\end{equation*}
Fix two disjoint subsets $A$, $B$ of $E$.  Denote by $i(x,y) =
i_{A,B}(x,y) $ the current through the arc $(x,y)$ for the process
which starts from the harmonic measure $\nu_{A,B}$ and which is killed
at $B$:
\begin{equation*}
i(x,y) \;:=\; \bb E_{\nu_{A,B}} \Big[ 
\sum_{n=0}^{\tau_B-1} \big\{ \mb 1\{\bb X_n = x, \bb X_{n+1}=y \} 
- \mb 1\{\bb X_n = y, \bb X_{n+1}=x \} \big\} \Big]\;.
\end{equation*}
By the Markov property and in view of \eqref{03}, if we denote by
$i^*(x,y)$ the current through the arc $(x,y)$ for the time reversed
chain, 
\begin{equation}
\label{38}
\begin{split}
i^*(x,y) \; & :=\; \sum_{a\in A} \nu_{A,B}(a) 
\{G^*_B(a,x) p^*(x,y) \;-\; G^*_B(a,y) p^*(y,x)\} \\
& =\; \sum_{a\in A} \nu_{A,B}(a) \Big\{ \frac 1{M(x)} G^*_B(a,x)
c^*(x,y) \;-\; \frac 1{M(y)} G^*_B(a,y) c^*(y,x) \Big\} \\
& =\; \Cap(A,B)^{-1} \big\{ V_{A,B}(x) \, c^*(x,y) \;-\;  
V_{A,B}(y)\, c^*(y,x)\big\}  \;.
\end{split}
\end{equation}

Since this last expression is equal to $\Cap(A,B)^{-1}
\Phi^*_{V_{A,B}} (x,y)$, $\Phi^*_{V_{A,B}}$ is, up to the
multiplicative constant $\Cap(A,B)$, the current through the arc
$(x,y)$ for the time reversed Markov chain $\bb X^*_n$ started from
the harmonic measure $\nu_{A,B}$ and killed at $B$. Analogously,
$\Phi_{V^*_{A,B}}$ is, up to the same multiplicative constant, the
current through the arc $(x,y)$ of the discrete time Markov chain $\bb
X_n$ started from the harmonic measure $\nu^*_{A,B}$ and killed at
$B$.

Given a function $f:E\to\bb R$, we may write the flow $\Phi_f$ as
$\Phi_f = \Psi_f + \Upsilon_f$, where $\Upsilon_f$ is the flow
given by
\begin{equation*}
\Upsilon_f (x,y) \;=\; c_a(x,y) \, \{ f(x) +  f(y) \} \;.
\end{equation*}
It turns out that the flows $\Psi_f$ and $\Upsilon_f$ are orthogonal:
\begin{equation}
\label{04}
\<\Psi_f , \Upsilon_f \>\;=\; \frac 12 \sum_{(x,y)\in \mc E} \frac 1{c_s(x,y)} \, 
\Psi_f(x,y) \, \Upsilon_f (x,y) \;=\; 0\;. 
\end{equation}
Indeed, by definition of the flows $\Psi_f$ and $\Upsilon_f$, 
\begin{equation*}
\sum_{(x,y)\in \mc E} \frac 1{c_s(x,y)} \, \Psi_f(x,y) \, \Upsilon_f (x,y)
\; =\; \sum_{x,y\in E} c_a(x,y)\, \{ f(x)^2 -  f(y)^2 \} \;.
\end{equation*}
Since $c_a(x,y) =  (1/2)\{c(x,y) - c(y,x)\} = (1/2)\{c(x,y) -
c^*(x,y)\}$, the previous expression is equal to
\begin{equation*}
\frac 12 \,  \sum_{x\in E} M(x)\, (I-P)f^2 (x)  
\;-\;  \frac 12 \, \sum_{x\in E} M(x) (I-P^*)f^2 (x) \;=\;0\;,
\end{equation*}
where $P$ represents the operator in $L^2(M)$ defined by $(Pg)(x) =
\sum_{y\in E} p(x,y) g(y)$, and where $P^*$ stands for the adjoint of
$P$ in $L^2(M)$. This proves \eqref{34}.

This orthogonality permits to restate Lemma \ref{s09} in a slightly
different form, quite useful in some cases. 

\begin{lemma}
\label{s16}
Fix a pair of points $a\not = b$ in $E$. Then,
\begin{equation*}
\Cap (\{a\}, \{b\}) \;=\; \inf_f \, \inf_{\varphi\in \mc C} 
\big \{ D(f) \;+\; \Vert \Upsilon_f - \varphi \Vert^2 \big\} \;,
\end{equation*}
where the infimum is carried over all functions $f:E\to \bb R$ such
that $f(a)=1$, $f(b)=0$.  
\end{lemma}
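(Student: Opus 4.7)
The plan is to derive this lemma as a direct rewriting of Lemma \ref{s09}, using the orthogonal decomposition $\Phi_f = \Psi_f + \Upsilon_f$ together with the two orthogonality facts already available: $\Psi_f \perp \Upsilon_f$ from \eqref{04}, and $\mc G \perp \mc C$ from \eqref{31}.

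First I would start from the identity furnished by Lemma \ref{s09},
\begin{equation*}
\Cap (\{a\}, \{b\}) \;=\; \inf_f \, \inf_{\varphi\in \mc C}
\Vert \Phi_f - \varphi \Vert^2 \;,
\end{equation*}
with $f$ ranging over functions with $f(a)=1$, $f(b)=0$. I would then substitute $\Phi_f = \Psi_f + \Upsilon_f$ inside the norm and observe that, for every admissible $\varphi$, the flow $\Psi_f \in \mc G$ is orthogonal both to $\Upsilon_f$ (by the computation \eqref{04}) and to $\varphi \in \mc C$ (by \eqref{31}). Hence $\Psi_f$ is orthogonal to the combined flow $\Upsilon_f - \varphi$, and the Pythagorean identity yields
\begin{equation*}
\Vert \Phi_f - \varphi \Vert^2 \;=\; \Vert \Psi_f \Vert^2
\;+\; \Vert \Upsilon_f - \varphi \Vert^2\;.
\end{equation*}

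Next I would invoke \eqref{10} to identify $\Vert \Psi_f \Vert^2 = \langle f, (-L) f\rangle_\mu$, and since the antisymmetric part of $L$ contributes nothing to this scalar product, this equals $\langle f, (-S) f\rangle_\mu = D(f)$. Inserting this into the previous display and taking the double infimum over $f$ and $\varphi$ immediately gives the desired formula.

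No step is genuinely delicate here; the only thing worth verifying carefully is that in \eqref{04} the orthogonality holds for \emph{every} admissible $f$ (which is already part of the statement of that identity) and that $\mc G \perp \mc C$ without any boundary condition on $f$ (which is exactly the content of \eqref{31}). Once those are in place the argument is a one-line Pythagorean decomposition, so there is no real obstacle.
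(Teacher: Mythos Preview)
Your argument is correct and is exactly the route the paper intends: it states Lemma \ref{s16} immediately after \eqref{04} with the remark that ``this orthogonality permits to restate Lemma \ref{s09} in a slightly different form,'' leaving the Pythagorean computation implicit. You have simply spelled out that computation, using \eqref{04}, \eqref{31}, and \eqref{10} in precisely the way the paper has in mind.
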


We are now ready to prove Theorem \ref{s02}.

\medskip
\noindent{\sl Proof of Theorem \ref{s02}.}
We proceed in two steps, collapsing each set at a time.  Fix two
disjoint subsets $A$, $B$ of $E$ and recall the notation introduced
around \eqref{18}. We first prove that
\begin{equation}
\label{41}
\inf_F \inf_\varphi \, \Vert \Phi_F - \varphi \Vert^2
\;=\; \inf_f \inf_\psi \, \Vert \Phi_f - \psi \Vert^2_A\;,
\end{equation}
where the infimum on the left hand side is carried over all functions
$F:E\to\bb R$ constant over $A$ and flows $\varphi$ such that $(\dv
\varphi)(x) =0$, $x\in A^c$, $\sum_{x\in A} (\dv \varphi)(x) =0$;
while on the right hand side $\Vert\,\cdot\,\Vert_A$ represents the
norm associated to the scalar product introduced in \eqref{42} on the
set $\oE_A$ for the process $\oX_t$ and the infimum is carried over
all functions $f:\oE_A\to\bb R$ and divergence free flows $\psi$ on
$\oE_A$.

Consider a function $F:E\to \bb R$ constant in $A$ and a flow
$\varphi$ on $E$ such that $(\dv \varphi)(x) =0$, $x\in A^c$,
$\sum_{x\in A} (\dv \varphi)(x) =0$. Recall the definition of the
function $f:\oE_A\to\bb R$ introduced below \eqref{17} and let $\psi$
be the flow on $\oE_A$ given by
\begin{equation*}
\psi (x,y) \;=\; \varphi (x,y)\;, \quad
\psi (x,\mf d) \;=\; \sum_{y\in A} \varphi (x,y)\;,
\quad x\,, y\in A^c\;.
\end{equation*}
One checks that $\psi$ is a divergence free flow. Moreover,
by Schwarz inequality,
\begin{equation*}
\Vert \Phi_f - \psi \Vert^2_A \;\le\; \Vert \Phi_F - \varphi
\Vert^2\;. 
\end{equation*}
It follows from this estimate that the left hand side of \eqref{41} is
greater than or equal to the right hand side.

Conversely, fix a function $f:\oE_A\to\bb R$ and a divergence free
flow $\psi$ on $\oE_A$. Let $F:E\to\bb R$ be the function defined
above \eqref{17}, and let $\varphi$ be the flow in $E$ given by
\begin{equation*}
\begin{split}
& \varphi (x,y) \;=\; \psi(x,y)\;, \quad x\,,y\in A^c\;,\qquad
\varphi (z,w) \;=\; 2 f(\mf d) \, c_a(z,w) \;, \quad z\,,w\in A\;, \\
& \varphi (x,y) \;=\; \Phi_F (x,y) \;-\; \frac{c_s(x,y)}{\sum_{z\in A}
  c_s(x,z)} \Big\{ \sum_{z\in A} \Phi_F(x,z) - \psi(x,\mf d)\Big\}
\;, \quad x\in A^c\,,y\in A \;.
\end{split}
\end{equation*}
One checks that $(\dv \varphi)(x)=0$, $x\in A^c$,
that $\sum_{x\in A} (\dv \varphi)(x)=0$, and that $\Vert \Phi_F -
\varphi \Vert =\Vert \Phi_f - \psi \Vert_A$. Therefore, the left hand
side of \eqref{41} is less than or equal to the right hand side,
proving claim \eqref{41}.

We are now in a position to prove the theorem.  Fix a site $x\in E$
and a set $A\not\ni x$. By \eqref{11}, $\Cap (\{x\},A) = \ocap (\{x\},
\{\mf d\})$. The assertion of the theorem when the set $B$ is a
singleton follows from Lemma \ref{s09} and \eqref{41}. The general
case is proved analogously by first collapsing the set $A$ and then
collapsing the set $B$. \qed \smallskip

We conclude this section with a bound on the capacity in the
denumerable case. Assume that $E$ is a countable set, fix a site $x\in
E$ and a set $B\not\ni x$, with $B^c$ finite. Then,
\begin{eqnarray}
\label{43}
\!\!\!\!\!\!\!\! &&
\Cap(\{x\}, B) \;\le\; \\
\!\!\!\!\!\!\!\! && \quad
\inf_F \Big\{ D(F) \;+\; 
\frac 12 \sum_{\substack{(x,y)\in \mc E \\ x,y\in B^c}}
\frac{c_a(x,y)^2}{c_s(x,y)} \{ \, F (x) + F (y) - 2 \,
\}^2 \;+\; 4 \sum_{\substack{(x,z)\in \mc E \\ x\in B^c, z\in B}}
\frac{c_a(x,z)^2}{c_s(x,z)}  \Big\},
\nonumber
\end{eqnarray}
where the infimum is carried over all functions $F:E\to\bb R$ such
that $F(x)=1$, $F(z)=0$, $z\in B$.

Indeed, recall the notation introduced around \eqref{23}. Denote by
$\oE$ the set $B^c \cup \{\mf d\}$, where $\mf d$ is an extra site added
to $E$. Let $\{\oX_t : t\ge 0\}$ be the process obtained from $X_t$ by
collapsing the set $B$ to the point $\mf d$, and let $\oD$, $\omE$,
$\oc (x,y)$ and $\ocap$ be the associated Dirichlet form, oriented
bonds, conductances and capacities, respectively.

By \eqref{24}, $\Cap(\{x\}, B) =\ocap (\{x\}, \{\mf d\})$. Fix a
function $F:E\to \bb R$ which vanishes on $B$ and is equal to $1$ at
$x$, and let $f:\oE \to \bb R$ be given by $f(y)=F(y)$, $y\in B^c$,
$f(\mf d)=0$. Since $\oc_a(x,y)$ is a divergence free flow, by Lemma
\ref{s16},
\begin{equation*}
\ocap (0,\mf d) \;\le\; \oD (f) \;+\; \frac 12 \sum_{(x,y)\in \omE}
\frac 1{\oc_s (x,y)}  \big\{ \oc_a(x,y) [f(x) + f(y)] - 2
\oc_a(x,y)\}^2\; . 
\end{equation*}
Clearly, $\oD (f) = D(F)$. On the other hand, by \eqref{23} if the arc
$(x,y)$ is contained in $B^c$, we may replace $\oc_s(x,y)$,
$\oc_a(x,y)$ and $f$ by $c_s(x,y)$, $c_a(x,y)$ and $F$,
respectively. In contrast, for an arc $(x,\mf d)$, $x\in B^c$, since
$\oc_t (x,\mf d) = \sum_{z\in B} c_t (x,z)$, $t=a,s$, and since $f
(\mf d)=0=F (z)$, $z\in B$, by Schwarz inequality,
\begin{equation*}
\begin{split}
& \Big\{ \oc_a(x,\mf d) \big[\, f (x) + f (\mf d) - 2 \, \big] \Big\}^2
\;=\; \Big\{ \sum_{z\in B} c_a(x,z) [ \, F (x) + F (z) -
2\, ] \Big \}^2 \\
&\quad \le\;
\sum_{z\in B} \frac{c_a(x,z)^2}{c_s(x,z)} [ F (x) + F (z) -
2]^2 \, \sum_{z\in B} c_s(x,z)\;.
\end{split}
\end{equation*}
Since $0\le F\le 1$, $F (x) + F (z) - 2$ is absolutely bounded by
$2$. Putting together all previous estimates we derive \eqref{43}.

\section{Recurrence criteria}
\label{sec4}

It is well known that in the reversible case the Dirichlet and the
Thomson principle provide powerful tools to prove the recurrence or
the transience of irreducible Markov processes evolving in countable
state spaces. In this section, we examine this matter in the non
reversible case.

Consider a irreducible Markov process $\{X_t : t\ge 0\}$ on a
countable state space $E$ satisfying the assumptions of the beginning
of Section \ref{sec1}. We assume, in particular, the existence of a
stationary state $\mu$. 

It is well known that the Markov process $X_t$ is recurrent if and
only if there exist a site $0\in E$ and a sequence of \emph{finite}
subsets $B_n$ containing $0$ and increasing to $E$, $B_n\subset
B_{n+1}$, $\cup_n B_n = E$, such that
\begin{equation*}
\lim_{n\to\infty} \bb P_0 \big [ T_{B_n^c} < T_0^+ \big]\;=\;0\;.
\end{equation*}
By \eqref{27}, for any finite set $B$ containing the site $0$,
\begin{equation*}
\frac 1{M (0)} \, \bb P_0 \big [ T_{B^c} < T_0^+ \big] \;=\; 
\Cap (0, B^c)\;.
\end{equation*}
Hence, the Markov process $X_t$ is recurrent if and only if there
exist a site $0\in E$ and a sequence of finite subsets $B_n$
containing $0$ and increasing to $E$ such that
\begin{equation}
\label{26}
\lim_{n\to\infty} \Cap (0, B_n^c) \;=\; 0\;.
\end{equation}
The proof of the recurrence is thus reduced to the estimation of the
capacity between a site and the complement of a finite set. This
problem can be further simplified by collapsing the set $B^c_n$ to a
point, as we did in Section \ref{sec2}.

The first two results follow from the previous observation and the
bounds stated in Lemmas \ref{s15} and \ref{s11}. Recall that $\{X^s_t
\,|\, t\ge 0\}$ stands for the reversible version of the process $X_t$
whose generator is given by $S$.

\begin{lemma}
\label{s17}
Let $\{X_t \,|\, t\ge 0\}$ be a irreducible Markov process on a
countable state space $E$ which admits a stationary measure. The
process is transient if so is the Markov process $\{X^s_t \,|\, t\ge
0\}$.
\end{lemma}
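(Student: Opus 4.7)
The approach is to use criterion \eqref{26} together with the bound of Lemma \ref{s15} via the contrapositive: we show that recurrence of $X_t$ forces recurrence of $X^s_t$.

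Before invoking Lemma \ref{s15} for $X^s_t$, I would first verify that the measure $\mu$, which is stationary for $X_t$, is also stationary for the symmetrized generator $S = (1/2)(L + L^*)$. This follows from the fact that $\mu$ is automatically stationary for $L^*$: using the balanced equations \eqref{25}, one has
\begin{equation*}
\sum_{x \neq y} \mu(x) \, r^*(x,y) \;=\; \sum_{x \neq y} \mu(y) \, r(y,x) \;=\; \mu(y) \, \lambda(y)\;,
\end{equation*}
so $\mu L^* = 0$ and hence $\mu S = 0$. Consequently, the criterion \eqref{26} is applicable to $X^s_t$, with capacities $\Cap^s$ computed with respect to the same mass $M(x) = \mu(x)\lambda(x)$ that appears in the statement.

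Now assume $X_t$ is recurrent. By \eqref{26} applied to $X_t$, there exist a site $0 \in E$ and an increasing sequence of finite sets $B_n \ni 0$ with $\cup_n B_n = E$ such that $\lim_n \Cap(0, B_n^c) = 0$. Applying Lemma \ref{s15} to the disjoint sets $\{0\}$ and $B_n^c$ gives
\begin{equation*}
0 \;\le\; \Cap^s(0, B_n^c) \;\le\; \Cap(0, B_n^c) \quad \text{for every } n\ge 1\;.
\end{equation*}
Passing to the limit, $\lim_n \Cap^s(0, B_n^c) = 0$. Since the same sequence $B_n$ witnesses \eqref{26} for $X^s_t$, we conclude that $X^s_t$ is recurrent. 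Contrapositively, if $X^s_t$ is transient then $X_t$ is transient, which is the desired statement.

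The only real subtlety is the verification that $\mu$ is stationary for $S$ (so that the capacity $\Cap^s$ is well defined via the formulas of Section \ref{sec1} with the same mass $M$); once this is in place, the argument is a one-line consequence of the comparison in Lemma \ref{s15} and of the characterization \eqref{26} of recurrence.
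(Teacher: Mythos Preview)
Your proof is correct and follows exactly the route indicated in the paper: the contrapositive together with the comparison $\Cap^s(A,B)\le\Cap(A,B)$ from Lemma \ref{s15} and the recurrence criterion \eqref{26}. Your extra verification that $\mu$ is stationary for $S$ (and that $\lambda^s=\lambda$, so the mass $M$ is the same) is a useful sanity check, but beyond that nothing differs from the paper's intended argument.
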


\begin{lemma}
\label{s13}
Let $\{X_t \,|\, t\ge 0\}$ be a irreducible Markov process on a
countable state space $E$ which admits a stationary measure. The
process is recurrent if its generator satisfies a sector condition and
if the Markov process $\{X^s_t \,|\, t\ge 0\}$ is recurrent.
\end{lemma}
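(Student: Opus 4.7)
The plan is to combine the capacity-based recurrence criterion \eqref{26} with the comparison inequality between the capacities of $L$ and its symmetric part $S$ provided by Lemma \ref{s11}. The strategy is completely analogous to the way one proves transience via Lemma \ref{s17}, but run in the opposite direction.

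First, since the reversible process $\{X^s_t : t \ge 0\}$ is irreducible (because $c_s(x,y) > 0$ whenever $c(x,y) + c^*(x,y) > 0$, which follows from the irreducibility of $X_t$ together with $\mu$ being stationary) and recurrent, the recurrence criterion \eqref{26} applied to $X^s_t$ yields a site $0 \in E$ and an increasing sequence of finite sets $B_n \ni 0$ with $\cup_n B_n = E$ such that
\begin{equation*}
\lim_{n\to\infty} \Cap^s(0, B_n^c) \;=\; 0\;.
\end{equation*}

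Next, I invoke Lemma \ref{s11}: the sector condition with constant $C_0$ gives
\begin{equation*}
\Cap(0, B_n^c) \;\le\; C_0 \, \Cap^s(0, B_n^c)
\end{equation*}
for every $n$. Letting $n \to \infty$ in this inequality and using the previous display shows that $\Cap(0, B_n^c) \to 0$ as well. Applying the recurrence criterion \eqref{26} in the opposite direction, this time to $X_t$ itself, yields the recurrence of $\{X_t : t \ge 0\}$.

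There is no real obstacle here; the entire content of the lemma is packaged into Lemma \ref{s11}, which is exactly where the sector condition is consumed. The only point worth a word of care is that Lemma \ref{s11} was stated for \emph{any} disjoint pair $A$, $B$, including the case where $B^c$ is infinite (the lemma's proof extended from the finite-$B^c$ case via Lemma \ref{s04}), so no extra approximation step is needed to apply it to the pairs $(\{0\}, B_n^c)$ above.
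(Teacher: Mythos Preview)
Your proof is correct and follows exactly the route indicated in the paper: combine the recurrence criterion \eqref{26} with the capacity comparison of Lemma \ref{s11}. One minor remark: in your application the set $B$ is $B_n^c$, so $B^c = B_n$ is finite and you are already in the easy case of Lemma \ref{s11}; the extension via Lemma \ref{s04} is not actually needed here.
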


Cycle random walks with bounded cycles, \cite{ma}, \cite{klo}, mean
zero asymmetric exclusion process \cite{v}, or asymmetric zero range
process on a finite cylinder \cite{gl3} are examples of non reversible
Markov processes which satisfy the sector condition.

\begin{lemma}
\label{s12}
Let $\{X_t \,|\, t\ge 0\}$ be a irreducible Markov process on a countable
state space $E$ which admits a stationary measure. The process is
recurrent if the Markov process $\{X^s_t \,|\, t\ge 0\}$ is recurrent
and if
\begin{equation*}
\sum_{(x,y)\in \mc E} \frac{c_a(x,y)^2}{c_s(x,y)} \;<\; \infty\;.
\end{equation*}
\end{lemma}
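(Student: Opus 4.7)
The plan is to apply the recurrence criterion \eqref{26}: for any exhaustion of $E$ by finite sets $B_n$ containing a fixed site $0 \in E$, it suffices to show $\Cap(\{0\}, B_n^c) \to 0$. My test function in the upper bound \eqref{43} would be the symmetric equilibrium potential $F_n := V^s_{\{0\}, B_n^c}$, which satisfies the required boundary conditions ($F_n(0)=1$, $F_n\equiv 0$ on $B_n^c$), takes values in $[0,1]$, and has support inside the finite set $B_n$. Since the Dirichlet form of $L$ coincides with that of $S$ (the antisymmetric part contributes nothing to $\<f,(-L)f\>_\mu$), one has $D(F_n) = \Cap^s(\{0\}, B_n^c)$, and this tends to zero by applying \eqref{26} to the recurrent reversible chain $X^s_t$.

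Next I would handle the remaining two terms in \eqref{43}. The boundary term $4\sum_{(x,z)\in\mc E,\, x\in B_n,\, z\in B_n^c} c_a(x,z)^2/c_s(x,z)$ is a tail of the hypothesis series $\sum_{(x,y) \in \mc E} c_a(x,y)^2/c_s(x,y) < \infty$, since each fixed arc is eventually interior to $B_n$; by dominated convergence with majorant $c_a^2/c_s$ it vanishes in the limit. For the bulk term $\tfrac 12 \sum_{(x,y)\in\mc E,\, x,y\in B_n} \tfrac{c_a(x,y)^2}{c_s(x,y)}\,[F_n(x)+F_n(y)-2]^2$ I would again invoke dominated convergence: $[F_n(x)+F_n(y)-2]^2 \le 4$ supplies the integrable majorant $4\, c_a^2/c_s$, and the problem reduces to the pointwise limit $F_n(x) \to 1$.

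The pointwise statement is where the recurrence hypothesis on $X^s_t$ is used a second time: $F_n(x) = \bb P^s_x[T_0 < T_{B_n^c}]$ is monotone non-decreasing in $n$ (because $T_{B_n^c}$ is monotone non-decreasing in $n$), and its supremum equals $\bb P^s_x[T_0 < \infty]$, which is $1$ by recurrence of the symmetric chain. The main (minor) obstacle I anticipate is verifying $T_{B_n^c}\uparrow \infty$ almost surely, which uses non-explosion of $X^s_t$ — a standard consequence of irreducibility on a countable state space together with the existence of a stationary measure. Combining the three vanishing contributions with \eqref{43} yields $\Cap(\{0\}, B_n^c) \to 0$, which by \eqref{26} gives recurrence of $X_t$.
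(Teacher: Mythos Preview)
Your argument is correct and uses the same upper bound \eqref{43}, but your route to controlling the bulk term differs from the paper's. The paper does not use pointwise convergence $F_n\to 1$; instead it fixes $\epsilon>0$, first chooses a \emph{finite set} $A\ni 0$ so large that the tail $\sum_{\{x,y\}\not\subset A} c_a(x,y)^2/c_s(x,y)$ is below $\epsilon$, and then takes as test function the symmetric potential $V^s_{A,B}$ for a suitably large $B^c\supset A$. Because $V^s_{A,B}\equiv 1$ on $A$, the factor $[F(x)+F(y)-2]^2$ vanishes for all arcs inside $A$, so the bulk term reduces immediately to a tail sum bounded by a multiple of $\epsilon$; the Dirichlet term is $\Cap^s(A,B)$, made small using recurrence of $X^s$ and subadditivity $\Cap^s(A,B)\le\sum_{x\in A}\Cap^s(\{x\},B)$.

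Your one-parameter version (keeping $A=\{0\}$ and sending $B_n\uparrow E$) trades this two-step $\epsilon$-argument for a dominated-convergence step that requires $F_n(x)=\bb P^s_x[T_0<T_{B_n^c}]\to 1$, hence non-explosion of $X^s_t$. That is fine here since recurrence of $X^s_t$ implies non-explosion, but the paper's choice sidesteps this point entirely: the potential is \emph{exactly} $1$ where it matters, rather than asymptotically close to $1$. Both proofs are short; the paper's is slightly more self-contained, while yours is a touch more streamlined once the non-explosion point is accepted.
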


\begin{proof}
Fix $\epsilon >0$ and a site $0\in E$. By assumption, there exists a
finite set $A\ni 0$ such that
\begin{equation*}
\sum_{\substack{(x,y)\in \mc E \\ \{x,y\} \not\subset A}}
\frac{c_a(x,y)^2}{c_s(x,y)} \;\le\; \epsilon\;.
\end{equation*}
By \eqref{27}, for all subsets $B$ of $E$ such that $A\subset B^c$,
$B^c$ finite, $\Cap^s(A,B)\le\sum_{x\in A} \Cap^s(\{x\},B)$. Hence,
since the process $X^s_t$ is recurrent, by \eqref{26} and by Lemma
\ref{s01}, there exists a finite set $B^c\supset A$ such that
$\Cap^s(A,B)\le \epsilon$.

Denote by $V^s_{A,B}: E\to \bb R$ the equilibrium potential associated
to the reversible process $X^s$: $V^s_{A,B} (x) = \bb P^s_x [T_A <
T_{B}]$. Since $D(V^s_{A,B}) = \Cap^s(A,B)$, by construction of $B$,
$D(V^s_{A,B})\le \epsilon$. Therefore, by \eqref{43} with
$F=V^s_{A,B}$, $\Cap(0,B)$ is bounded above by
\begin{equation*}
\epsilon \;+\; \frac 12 \sum_{\substack{(x,y)\in \mc E \\ x,y\in B^c}}
\frac{c_a(x,y)^2}{c_s(x,y)} \{ \, V^s_{A,B} (x) + V^s_{A,B} (y) - 2 \,
\}^2
\;+\; 4 \sum_{\substack{(x,z)\in \mc E \\ x\in B^c, z\in B}}
\frac{c_a(x,z)^2}{c_s(x,z)}  \;\cdot
\end{equation*}
Since $V^s_{A,B}$ is identically equal to $1$ on $A$, the
previous expression is less than or equal to
\begin{equation*}
\epsilon \;+\; 4 \sum_{\substack{(x,y)\in \mc E \\ x\in A,y\in A^c}}
\frac{c_a(x,y)^2}{c_s(x,y)} 
\;+\; 2 \sum_{\substack{(x,y)\in \mc E \\ x,y\in A^c}}
\frac{c_a(x,y)^2}{c_s(x,z)}  \;\cdot
\end{equation*}
By definition of the set $A$, this expression is bounded by
$5\epsilon$, which concludes the proof of the lemma.
\end{proof}

\subsection{Random walks with self-similar rescaled invariant
  potential}
In \cite{du}, Durrett built from a random potential, with a large
scale self-similarity property, a reversible nearest-neighbor random
walk on $\bb Z^d$ for which Sinai random walk is a special case when
$d=1$. He proved that such a random walk is recurrent under simple and
natural assumptions on the scaling limit of the potential. Note,
however, that such a random walk could never be a particular case of
classical randoms walks in random environment in dimension $d\ge 2$
due to the reversibility condition.

We want to point out here that the key feature of this model is {\sl
  not} the reversibility but, as Durrett suggested, the existence of a
strongly fluctuating invariant measure. The reason for the restriction
to the reversible was that it allowed the use of the Dirichlet
principle. Our extended Dirichlet principle permits to reproduce
Durrett's argument with only assumptions on the invariant measure and
without the reversibility hypothesis.

Consider a discrete time, nearest-neighbor random walk $\{X_n \,|\,
n\ge 1\}$ on $\bb Z^d$ with random transition probabilities
$p(x,y):\Omega \to [0,1]$, and assume the existence of a (random)
invariant measure $\mu$. We define the invariant potential $V:\bb
Z^d\to\bb R$ by
\begin{equation*}
\mu(x) \;=\; e^{-V(x)}\; \quad x\in\bb Z^d \;,
\end{equation*}
and assume, without loss of generality, that $V(0)=0$. The relation
between the random potential and the invariant measure in \cite{du} is
not exactly the same, but this definition makes our point more
clear. We can extend $V$ into a continuous function $V:\bb R^d\to\bb
R$ and see it as a random variable in $C(\bb R^d, \bb R)$, the space
of continuous functions from $\bb R^d$ to $\bb R$ equipped with the
topology of uniform convergence on compact sets. We assume that there
exists $\alpha>0$ and a random variable $W:\Omega\to C(\bb R^d, \bb
R)$ such that $\lambda^{-\alpha} V(\lambda \,\cdot\,)$ converges in
law to $W(\,\cdot\,)$ when $\lambda\uparrow\infty$. Hence, $W$ is a
self-similar $d$-dimensional process and we have the following result.

\begin{lemma}
\label{s20}
If there is almost surely $a>0$ such that the connected component of
the origin in $\{x\in\bb R^d: W(x)<a\}$ is bounded, then $X$ is almost
surely recurrent.
\end{lemma}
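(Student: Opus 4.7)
The plan is to follow Durrett's recurrence argument, using Theorem \ref{s10} in place of the usual reversible Dirichlet principle. By the criterion \eqref{26}, it suffices to construct almost surely an increasing sequence of finite sets $B_n\ni 0$ with $B_n\uparrow\bb Z^d$ and $\Cap(\{0\},B_n^c)\to 0$. The natural choice, dictated by the self-similarity, is $B_n=B_{\lambda_n}$ along a deterministic sequence $\lambda_n\uparrow\infty$, where $B_\lambda$ denotes the connected component of the origin in $\{x\in\bb Z^d:V(x)<a\lambda^\alpha\}$. The weak convergence of $\lambda^{-\alpha}V(\lambda\,\cdot\,)$ to $W$ in $C(\bb R^d,\bb R)$, combined with the almost-sure boundedness of the component of $0$ in $\{W<a\}$, implies after a Skorokhod coupling and diagonal extraction that a.s.\ $B_{\lambda_n}$ is finite, lies in a ball of radius $C\lambda_n$, and exhausts $\bb Z^d$.

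We next introduce Durrett's test function $F_\lambda(x)=\max(0,1-V(x)/(a\lambda^\alpha))\,\mb 1\{x\in B_\lambda\}$, which equals $1$ at the origin and vanishes off $B_\lambda$. Collapsing $B_\lambda^c$ to a single state via \eqref{24} reduces the problem to a finite state space; applying Theorem \ref{s10} and bounding the supremum by its unconstrained optimizer gives
\begin{equation*}
\Cap(\{0\},B_\lambda^c) \;\le\; \<L^*F_\lambda,\,(-S)^{-1}L^*F_\lambda\>_\mu\;.
\end{equation*}
Writing $L^*=S-A$ with $A$ antisymmetric in $L^2(\mu)$ and expanding, the cross terms vanish because $\<F,AF\>_\mu=0$ and $\<SF,(-S)^{-1}AF\>_\mu=-\<F,AF\>_\mu=0$, and one obtains the clean decomposition
\begin{equation*}
\Cap(\{0\},B_\lambda^c) \;\le\; D(F_\lambda) \;+\; \<AF_\lambda,\,(-S)^{-1}AF_\lambda\>_\mu\;.
\end{equation*}
In the reversible case $A\equiv 0$ this is precisely Durrett's starting inequality; the antisymmetric term is the genuinely new contribution to be controlled.

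Durrett's argument handles the symmetric piece $D(F_\lambda)$. Rewriting it as a sum over nearest-neighbor edges in $B_\lambda$, the discrete gradient $F_\lambda(y)-F_\lambda(x)$ is at most $(a\lambda^\alpha)^{-1}|V(y)-V(x)|$, and the self-similar limit localizes the sum to the compact set $\lambda_n^{-1}B_{\lambda_n}\to U$ (the component of $0$ in $\{W<a\}$) and forces $D(F_\lambda)\to 0$ along the subsequence. An equivalent form of the same bound is obtained from Theorem \ref{s02} by taking the divergence-free flow $\varphi=c_a$ (divergence-free by the stationarity of $\mu$), which yields the flow-side estimate
\begin{equation*}
\Cap(\{0\},B_\lambda^c) \;\le\; D(F_\lambda) \;+\; \frac12\sum_{(x,y)\in\mc E}\frac{c_a(x,y)^2}{c_s(x,y)}\bigl(F_\lambda(x)+F_\lambda(y)-1\bigr)^2\;,
\end{equation*}
the second sum reducing in the collapsed chain to edges incident to $B_\lambda$.

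The main obstacle is controlling the non-reversible correction $\<AF_\lambda,(-S)^{-1}AF_\lambda\>_\mu$ (equivalently the flow-side sum above) under no hypotheses beyond the invariance of $\mu$ and the self-similarity of $V$, so that neither Lemma \ref{s12} nor Lemma \ref{s13} is directly available. The heuristic is that $AF_\lambda(x)=\tfrac12\sum_y(r(x,y)-r^*(x,y))(F_\lambda(y)-F_\lambda(x))$ inherits the scale-$\lambda$ smoothness of $F_\lambda$, so that the $H^{-1}(-S)$-norm of $AF_\lambda$ decays at the same $\lambda^{-2\alpha}$ rate as $D(F_\lambda)$ after rescaling $\mu$ on the finite set $B_\lambda$. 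Establishing this cancellation rigorously through the self-similar structure of $\mu$---without relying on a sector condition or on global summability of $c_a^2/c_s$---is the technical heart of the argument and the point at which the extended Dirichlet principle genuinely supplants the reversible one.
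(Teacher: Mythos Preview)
Your proposal has a genuine gap: you leave the control of the non-reversible correction $\<AF_\lambda,(-S)^{-1}AF_\lambda\>_\mu$ as a heuristic (``the technical heart of the argument''), and nothing in the hypotheses on $V$ alone lets you estimate an $H^{-1}(-S)$ norm of $AF_\lambda$ --- you have no information on the rates beyond $\mu$ being invariant, so there is no reason the antisymmetric term should scale like $\lambda^{-2\alpha}$. The smooth test function $F_\lambda(x)=\max(0,1-V(x)/(a\lambda^\alpha))$ is also not Durrett's choice in this context and is what forces you into the $(-S)^{-1}$ estimate.

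The paper's proof avoids this entirely by taking the crudest possible test function, the indicator $F=\mb 1\{B\}$, and using the flow bound \eqref{43} with the divergence-free flow $2c_a$. With the indicator, the interior contribution to the second sum in \eqref{43} vanishes because $F(x)+F(y)-2=0$ for $x,y\in B$, so only boundary edges survive in both the Dirichlet form and the non-reversible correction. Since $|c_a|\le c_s$ always (as $c=c_s+c_a\ge 0$ and $c^*=c_s-c_a\ge 0$), the correction is dominated by the same boundary sum as $D(F)$, giving
\[
\Cap(\{0\},B^c)\;\le\; 5\sum_{x\in B,\,y\in B^c} c_s(x,y)\;\le\;5\,\mu(\partial_-B)\,.
\]
The rest is Durrett's argument verbatim: a Skorokhod coupling makes $n^{-\alpha}V_n(n\cdot)\to W$ almost surely, one takes $G_n=(nC_a)\cap\bb Z^d$ with $C_a$ the bounded component of $0$ in $\{W<a\}$, and the self-similarity forces $\mu_n(\partial_-G_n)\le n^d(2R)^d e^{-(a/4)n^\alpha}\to 0$. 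No $(-S)^{-1}$ ever appears. The moral is that the extended Dirichlet principle is used here only through the elementary inequality \eqref{43}, and the indicator test function makes the non-reversible piece trivially comparable to the reversible one.
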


We refer to \cite{du} for examples of (reversible) processes which
satisfy such hypotheses. Even though we could relax Durrett's
reversibility hypothesis, it is not clear how to build non artificial
irreversible examples in which one has enough control on an invariant
measure to check the assumptions on $V$. One can start, for example, as
in \cite{du}, with a random potential $V$ with stationary increments,
and build the reversible random walk inside this potential to have
$\mu$ as invariant measure. We may then add some irreversibility by
superposing to this reversible dynamics some drift along cycles on the
level sets of $V$, keeping $\mu$ as an invariant measure.

\medskip
\noindent{\sl Proof of Lemma \ref{s20}.}
We follow closely Durrett's proof. First, following Skorohod
\cite{sk}, we can build on the same probability space random variables
$V_1, V_2, \dots$ with the same law as $V$ and such that $n^{-\alpha}
V_n(n\,\cdot\,)$ converges {\sl almost surely} in $C(\bb R^d, \bb R)$
to $W(\,\cdot\,)$. Define $C_a$, $a>0$, as the bounded connected
component of the origin in $\{x\in\bb R^d : W(x)<a\}$, and set
\begin{equation*}
G_n = (n C_a) \cap \bb Z^d\;, \quad
\partial_- G_n = \{x\in G_n : \exists y\not\in G_n\;, \Vert
x-y\Vert=1\}\; ,
\end{equation*}
where $\Vert \,\cdot\,\Vert$ stands for the Euclidean norm. 

We claim that $\mu_n(\partial_- G_n)$ converges almost surely to $0$,
where $\mu_n(x) = e^{-V_n(x)}$, $x\in\bb Z^d$. Indeed, by assumption
there are $r$, $R>0$ such that
\begin{equation*}
C_a \subset [-R,R]^d \;,\quad W(y)\ge a/2
\end{equation*}
for all $y\in B(z,r)$, $z\in \partial C_a$, where $B(x,r)$ stands for
the ball centered at $x$ with radius $r$ and $\partial C_a$ for the
boundary of $C_a$. Therefore, almost surely, for $n$ large enough,
\begin{equation*}
\mu_n(\partial_- G_n) \;=\; \sum_{x\in \partial_- G_n} e^{-V_n(x)} \;\le\;
|G_n|\, e^{-(a/4)n^\alpha} \;\le\; n^d (2R)^d e^{-(a/4)n^\alpha}\;,
\end{equation*}
which proves the claim.

For any finite subset $B$ of $\bb Z^d$ which contains the origin,
since $\mu(0) = e^{-V(0)}=1$, we have
\begin{equation*}
\bb P_0 [ \tau^+_0 = \infty] \;\le\; \mu(0) 
\bb P_0 [ \tau^+_0 > \tau^+_{B^c}] \;=\; \Cap(0,B^c)\;.
\end{equation*}
By taking the test function $\mb 1\{B\}$ in \eqref{43} we obtain that
$\Cap(0,B^c)$ is bounded above by
\begin{equation*}
\sum_{x\in B, y\in B^c} c_s(x,y) \;+\;
4 \sum_{x\in B, y\in B^c} \frac{c_a(x,y)^2}{c_s(x,y)}
\;\le\; 5 \sum_{x\in B, y\in B^c} c_s(x,y)  \;\le\; 
5 \mu(\partial_- B)\;.
\end{equation*}
Since $\mu(0) = 1$ we also have $\Cap(0,B^c)\leq 1$,
thus $\Cap(0,B^c)\leq 5\mu(\partial_-B)\wedge 1$.
Now, for any $k > 0$,
\begin{eqnarray*}
P_0(\tau_0^+ = +\infty) 
&\leq& \min \left\{\Cap(0, B^c) \geq 0 :\: 0 \in B\subset [-k, k]^d\right\} \\
&\leq& \min_{0\in B\subset [-k,k]^d} 5\mu(\partial_-B)\wedge 1.
\end{eqnarray*}
Since $V_n$ has the same distribution as $V$, taking expected values
with respect to the environment, denoted by $\mb E$, by the monotone
convergence theorem, we obtain that, for all $n\geq 1$,
\begin{eqnarray*}
&& \mb E \big[ \,  \bb P_0[\tau_0^+ = +\infty] \, \big ]
\;\le\; \lim_{k\rightarrow \infty} 
\mb E\Big[ \min_{0\in B\subset [-k,k]^d} 5\mu(\partial_-B)\wedge 1 \Big]\\ 
&& \quad =\; \lim_{k\rightarrow \infty} 
\mb E\Big[ \min_{0\in B\subset [-k,k]^d} 5\mu_n(\partial_-B)\wedge 1
\Big] \\
&& \qquad =\; \mb E\Big[ \lim_{k\rightarrow \infty} 
\min_{0\in B\subset [-k,k]^d} 5\mu_n(\partial_-B)\wedge 1 \Big] \;\le\;
\mb E\Big[ 5\mu_n(\partial_-G_n)\wedge 1 \Big]\;.
\end{eqnarray*}
Thus, by the dominated convergence theorem,
\begin{eqnarray*}
\mb E \big[ \, \bb P_0[\tau_0^+ = +\infty] \,\big]
&\leq& \lim_{n\rightarrow +\infty} 
\mb E\big[ \, 5\mu_n(\partial_-G_n)\wedge 1 \,\big] \\
& = & \mb E\Big[ \lim_{n\rightarrow +\infty}  5\mu_n(\partial_-G_n)
\wedge 1 \Big] \;=\; 0\;,
\end{eqnarray*}
which proves that the process is almost surely recurrent. \qed \smallskip

\subsection{Two dimensional random walk in asymmetric random
  conductances.} 

The most natural way to generalize the classical random conductance
model on a graph may be the following. To define the asymmetric
conductances $c(x,y)$ on each arc $(x,y)$ we superpose symmetric
functions $c_s(x,y)$ and a divergence free flow $c_a(x,y)$ with the
restriction that $|c_a|\le c_s$ to end with nonnegative conductances
$c(x,y)$. 

More precisely, consider a family $\Gamma$ of finite cycles $\gamma$
on a countable graph $(E, \mc E)$, and a family of nonnegative
random variables $Z_\gamma$, $\gamma\in \Gamma$, such that for each
$(x,y)\in\mc E$,
\begin{equation*}
\sum_{\gamma\in\Gamma} Z_\gamma \, |\chi_\gamma(x,y)|\;<\; \infty\;,
\end{equation*}
where $\chi_\gamma$ is the divergence free flow introduced in
\eqref{40}.  Define the divergence free flow $c_a$ by
\begin{equation*}
c_a(x,y) \;=\; \sum_{\gamma\in\Gamma} Z_\gamma \, \chi_\gamma(x,y)\;,
\quad (x,y)\in \mc E\;.
\end{equation*}
There are two natural ways to define symmetric conductances in this
context. Consider a family of nonnegative random variables
$\{Y_{(x,y)} : (x,y)\in\mc E\}$ such that $Y_{(x,y)}=Y_{(y,x)}$. We
may set $c_s(x,y)= Y_{(x,y)} + |c_a(x,y)|$, or $c_s(x,y)= Y_{(x,y)} +
\sum_{\gamma\in\Gamma} Z_\gamma \, |\chi_\gamma(x,y)|$.  \smallskip

In the special case of the two dimensional lattice, we can decompose
each flow associated to a finite cycle as a linear combination of
elementary flows associated to cycles of length $4$. For $x\in\bb
Z^2$, denote by $\gamma_x$ the cycle $(x, x+e_1, x+e_1+e_2, x+e_2,x)$,
where $e_1$, $e_2$ stands for the canonical basis of $\bb R^2$. A
flow $\chi_\gamma$ associated to a finite cycle $\gamma$ can be
written as
\begin{equation*}
\chi_\gamma \;=\; \sum_{x\in\bb Z^d} W_{\gamma,\gamma_x} \, \chi_{\gamma_x} \;,
\end{equation*}
where $W_{\gamma,\gamma_x}=1$ (resp. $-1$) if the cycle $\gamma_x$ is
contained in the interior of $\gamma$ and the cycle $\gamma$ runs
counter-clockwise (resp. clockwise), and $W_{\gamma,\gamma_x}=0$ if
the cycle $\gamma_x$ is not contained in the interior of $\gamma$.

Denote by $\mb E$ expectation with respect to the random variables
$Z_\gamma$ and assume that
\begin{equation*}
\sum_{\gamma\in \Gamma} \mb E[Z_\gamma] \, |W_{\gamma,\gamma_x}| \;<\; \infty
\quad\text{for all $x\in\bb Z^d$}\;.
\end{equation*}
In this case $W_{\gamma_x} := \sum_{\gamma\in \Gamma} Z_\gamma \,
W_{\gamma,\gamma_x}$ is almost surely well defined for all $x\in\bb
Z^d$ and so is the divergence free flow $c_a$ given by
\begin{equation}
\label{35}
c_a \;=\; \sum_{x\in\bb Z^d} W_{\gamma_x}\, \chi_{\gamma_x}\;.
\end{equation}

Note that each arc $(x,y)$ belongs to exactly two elementary cycles,
denoted by $\gamma^\pm(x,y)$ and characterized by the fact that
$\chi_{\gamma^\pm(x,y)} (x,y) = \pm 1$. With this notation, for any
arc $(x,y)$, $c_a(x,y) = W_{\gamma^+(x,y)} - W_{\gamma^-(x,y)}$.

\begin{lemma}
\label{s18}
Suppose that
\begin{equation*}
\sup_{(x,y)} \mb E\Big[ \, c_s(x,y) \;+\; \frac{ [W_{\gamma^+(x,y)}]^2+
[W_{\gamma^-(x,y)}]^2}{c_s(x,y)} \, \Big] \;<\; \infty\;,
\end{equation*}
where the supremum is carried over all arcs. Then, the random walk is
almost surely recurrent.
\end{lemma}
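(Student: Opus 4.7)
The plan is to prove almost sure recurrence by bounding the \emph{expected} capacity through a test flow adapted to the cycle decomposition \eqref{35}. Set $B_n=[-n,n]^2\cap\bb Z^2$. Since $\{\Cap(\{0\},B_n^c)\}_n$ is non-increasing in $n$ by Lemma \ref{s01}, by \eqref{26} and the monotone convergence theorem it suffices to show $\mb E[\Cap(\{0\},B_n^c)]\to 0$. Collapsing $B_n^c$ to a single state $\mf d$ as in \eqref{23}, identity \eqref{24} gives $\Cap(\{0\},B_n^c)=\ocap(\{0\},\{\mf d\})$. Applying Lemma \ref{s16} to the \emph{finite} collapsed chain on $\oE:=B_n\cup\{\mf d\}$ yields
\begin{equation*}
\ocap(\{0\},\{\mf d\})\;\le\; \oD(f)\;+\;\Vert \overline{\Upsilon}_f - \varphi\Vert^2_{\oE}\;,
\end{equation*}
valid for any $f:\oE\to\bb R$ with $f(0)=1$, $f(\mf d)=0$ and any divergence-free flow $\varphi$ on $\oE$.

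Take $f$ to be the standard two-dimensional logarithmic cutoff $f(x)=\max\{0,\,1-\log(1\vee \Vert x\Vert)/\log n\}$ for $x\in B_n$, and $f(\mf d)=0$. The key novelty is to build $\varphi$ from the plaquettes of \eqref{35}. Set $\bar f_z=\tfrac{1}{4}\sum_{v} f(v)$, the average of $f$ over the four corners of $\gamma_z$ (with the convention $f(v)=0$ for $v\in B_n^c$), and define
\begin{equation*}
\varphi\;=\;2\sum_{z\in\bb Z^2} W_{\gamma_z}\,\bar f_z\,\overline{\chi}_{\gamma_z}\;,
\end{equation*}
where $\overline{\chi}_{\gamma_z}$ is the image of $\chi_{\gamma_z}$ in $\oE$ obtained by identifying every $v\in B_n^c$ with $\mf d$. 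Projections of cycle flows remain divergence-free, so $\varphi$ is admissible.

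Two estimates drive the argument. First, a classical 2D computation gives $\mb E[\oD(f)]\le C\sup_e\mb E[c_s(e)]/\log n$, since the gradient of $f$ has size of order $1/(r\log n)$ at radius $r$. Second, on an interior arc $(x,y)$ belonging to the two plaquettes $\gamma^\pm(x,y)$, the local error is
\begin{equation*}
\overline{\Upsilon}_f(x,y) - \varphi(x,y)\;=\;W_{\gamma^+(x,y)}\big\{f(x)+f(y)-2\bar f_{\gamma^+(x,y)}\big\} - W_{\gamma^-(x,y)}\big\{f(x)+f(y)-2\bar f_{\gamma^-(x,y)}\big\}\;,
\end{equation*}
and the elementary identity
\begin{equation*}
f(x)+f(y)-2\bar f_z\;=\;\tfrac{1}{2}\big\{[f(x)-f(p)]+[f(y)-f(q)]\big\}\;,
\end{equation*}
where $p,q$ are the corners of $\gamma_z$ opposite to $\{x,y\}$, shows that this error is controlled by nearest-neighbor gradients of $f$. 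Dividing by $c_s(x,y)$, using the hypothesis $\mb E[W_{\gamma^\pm}^2/c_s]\le K$, and summing over arcs produces another $O(1/\log n)$ bound. Combining the two estimates gives $\mb E[\Cap(\{0\},B_n^c)]=O(1/\log n)\to 0$.

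The main obstacle is the bookkeeping at the boundary: plaquettes straddling $\partial B_n$ contribute both to interior arcs of $B_n$ (with some corners assigned value $0$) and to collapsed arcs $(x,\mf d)$. One must verify that $\varphi$ really is divergence-free at $\mf d$, which follows from the fact that projecting each four-step cycle to $\oE$ preserves the telescoping identity yielding zero divergence at every vertex, including $\mf d$. One must also control the contribution of the arcs $(x,\mf d)$ to $\Vert\overline{\Upsilon}_f-\varphi\Vert^2_{\oE}$ by a Cauchy--Schwarz argument analogous to the one in the derivation of \eqref{43}; because the logarithmic cutoff satisfies $f\equiv 0$ on $\partial B_n$ and its outer neighborhood, these boundary terms are of lower order and do not affect the final $O(1/\log n)$ estimate.
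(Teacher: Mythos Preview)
Your proof is correct and follows essentially the same approach as the paper's: the same logarithmic test function and the same plaquette-weighted test flow (your $2\bar f_z$ equals the paper's $\tfrac12 F_n(\gamma_z)=\tfrac12\sum_{v\in\gamma_z}f_n(v)$), leading to the identical local error identity and the same $O(1/\log n)$ bound on the expected capacity. The only cosmetic differences are that you invoke Lemma~\ref{s16} after explicitly collapsing (the paper instead says ``repeating the proof of \eqref{43}''), and that you use monotonicity of the capacity to deduce almost sure convergence directly, whereas the paper extracts a subsequence.
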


\begin{proof}
Let $B^c_n = [-n,n]^2$, $n\ge 1$, consider a function $f_n: \bb
Z^2\to\bb R$ such that $f_n(0)=1$, $f_n(x)=0$ for $x\in B_n$, and a
divergence free flow $\psi_n = \sum_{x} a_x \chi_{\gamma_x}$, where
the sum is performed over all $x\in \bb Z^2$ for which the elementary
cycle $\gamma_x$ is contained in $B^c_n$. Repeating the proof of
\eqref{43} and keeping in mind that $c_a$ is absolutely bounded by
$c_s$, we obtain that
\begin{equation*}
\Cap(0,B_n) \;\le\; D(f_n) \;+\; \frac 12 \sum_{x,y\in B^c_n} \frac
1{c_s(x,y)} \Big\{ c_a(x,y)[f_n(x) + f_n(y)] - \psi_n(x,y)\Big\}^2\;. 
\end{equation*}

Consider the divergence free flow $\varphi_n$ given by
\begin{equation*}
\varphi_n \;=\; \frac 12 \sum_{x} F_n (\gamma_x) \, 
W_{\gamma_x} \, \chi_{\gamma_x}\;, \quad\text{where}\quad   
F_n (\gamma_x) \;=\; \sum_{z\in  \gamma_x} f_n(z) \;,
\end{equation*}
and where the sum is carried over all sites $x$ in $\bb Z^2$ for which
the elementary cycle $\gamma_x$ is contained in $B^c_n$.  By the
previous bound,
\begin{equation*}
\Cap(0,B_n) \;\le\; D(f_n) \;+\; \frac 12 \sum_{x,y\in B^c_n} \frac
1{c_s(x,y)} \Big\{ c_a(x,y)[f_n(x) + f_n(y)] -
\varphi_n(x,y)\Big\}^2\;. 
\end{equation*}

As we know, $D(f_n) = (1/2) \sum_{x,y\in\bb Z^2} c_s(x,y)
[f_n(y)-f_n(x)]^2$. On the other hand, it follows from the definitions
of the asymmetric conductance and the divergence free flow $\varphi_n$
that $c_a(x,y)[f_n(x) + f_n(y)] - \varphi_n(x,y)$ is equal to
$W_{\gamma^+(x,y)}\{f_n(x) + f_n(y) - (1/2) F_n (\gamma^+(x,y))\} -
W_{\gamma^-(x,y)} \{f_n(x) + f_n(y) - (1/2) F_n (\gamma^-(x,y))\}$ if
the arc $(x,y)$ does not belong to one side of the square $B^c_n$. The
absolute value of this difference is bounded above by
$|W_{\gamma^+(x,y)}| \max_{e\in \gamma^+(x,y)} |f_n(e^+)-f_n(e^-)| +
|W_{\gamma^-(x,y)}| \max_{e\in \gamma^-(x,y)} |f_n(e^+)-f_n(e^-)|$.
If the arc $(x,y)$ belongs to one side of the square $B^c_n$, taking
advantage of the fact that $f_n$ vanishes outside $B^c_n$, we obtain a
similar formula with an extra factor $2$.  In conclusion,
$\Cap(0,B_n)$ is bounded above by
\begin{equation*}
4\, \sum_{x,y\in\bb Z^2} \Big\{ c_s(x,y) 
+ \frac{[W_{\gamma^-(x,y)}]^2 + [W_{\gamma^+(x,y)}]^2}{c_s(x,y)} \Big\} 
\max_{e} [f_n(e^+)-f_n(e^-)]^2 \;,
\end{equation*}
where the maximum is carried over all arcs $e$ in $\gamma^-(x,y) \cup
\gamma^+(x,y)$.

Let 
\begin{equation*}
f_n(x) \;=\; \Big( 1 - \frac{\log (1+\Vert x\Vert_\infty)}{\log (n+2)}
\Big) \mb 1 \{[-n,n]^2\} (x)\;,
\end{equation*}
where $\Vert x\Vert_\infty = \max \{|x_1|,|x_2|\}$, $x=(x_1,x_2)$.
It follows from this choice and from the assumption of the lemma
that 
\begin{equation*}
\lim_{n\to\infty} \mb E[\Cap (0,B_n)]\;=\;0\;.
\end{equation*}
In particular, there exists almost surely a subsequence $(B_{n_k} :
k\ge 1)$ such that $\lim_{k\to\infty} \Cap (0,B_{n_k})=0$ and the
almost sure recurrence follows. 
\end{proof}

We conclude with an example which satisfies the assumptions of the
previous lemma. Suppose that the random variables $Z_\gamma$ are
independent Poisson variables with parameter $\lambda^{|\gamma|}$,
$0<\lambda<1/3$, and that the random variables $Y_{(x,y)}$ have a
common distribution bounded away from $0$ and with a finite first
moment. Let $c_a$ be given by \eqref{35} and let $c_s(x,y) = Y_{(x,y)}
+ |c_a(x,y)|$. We claim that the hypotheses of the previous result are
fulfilled.

Indeed, by assumption there exists $\delta>0$ such that $c_s(x,y) \ge
Y_{(x,y)}\ge \delta>0$ almost surely. Therefore, to show that the
assumptions of the previous lemma are in force we need only to prove
that
\begin{equation}
\label{36}
\sup_{(x,y)} \mb E\big[ c_s(x,y) \big] \;<\;\infty \quad
\text{and}\quad \sup_{x\in\bb Z^d} 
\mb E\big[ W_{\gamma_x}^2 \big] \;<\;\infty\;. 
\end{equation}

Since $|c_a(x,y)| \le W_{\gamma^+(x,y)} + W_{\gamma^-(x,y)}$, for
every arc $(x,y)$,
\begin{equation*}
\mb E\big[ c_s(x,y) \big] \;\le\; \mb E\big[ Y(x,y) \big] \;+\;
\sum_{p=\pm} \sum_{\gamma\in \Gamma} W_{\gamma, \gamma^p(x,y)} \,
\mb E\big[Z_\gamma \big]\;.
\end{equation*}
By assumption, the first term on the right hand side is bounded
uniformly over $(x,y)$, while the second term is less than or equal to
$\sum_{k\ge 4} 8 k 3^{k} \lambda^{k}$ because there are at most
$4\cdot 3^{k-1}$ self-avoiding walks of length $k$ and because a cycle
of length $k$ containing in its interior an elementary cycle must
cross a line parallel to one of the axis in at most $2k$ points. This
proves the first bound in \eqref{36}.  To prove the second bound, fix
an elementary cycle $\gamma_x$. By definition of the random variables
$Z_\gamma$,
\begin{equation*}
\mb E\big[ W_{\gamma_x}^2 \big] \;=\; \mb E\big[ W_{\gamma_x} \big]^2 \;+\;
\sum_{\gamma\in \Gamma} \lambda^{|\gamma|} \, W_{\gamma, \gamma_x}^2\;.
\end{equation*}
The first expectation has been estimated above, while the second one
can be estimated in the same way. This concludes the proof of
\eqref{36}.

\smallskip
\noindent{\bf Acknowledgment}. This collaboration took place during the
visit of the first author to the Unit\'e Mixte Internationale CNRS –
IMPA (UMI 2924).

\end{document}